 \definecolor{BLACK}{gray}{0}
 \definecolor{WHITE}{gray}{1}
 \definecolor{RED}{rgb}{1,0,0}
 \definecolor{GREEN}{rgb}{0,1,0}
 \definecolor{BLUE}{rgb}{0,0,1}
 \definecolor{CYAN}{cmyk}{1,0,0,0}
 \definecolor{MAGENTA}{cmyk}{0,1,0,0}
 \definecolor{YELLOW}{cmyk}{0,0,1,0}
  \theoremstyle{definition}
  \newtheorem{defn}{\protect\definitionname}
  \theoremstyle{remark}
  \newtheorem{rem}{\protect\remarkname}
  \theoremstyle{plain}
  \newtheorem{prop}{\protect\propositionname}
\theoremstyle{plain}
\newtheorem{thm}{\protect\theoremname}
  \theoremstyle{plain}
  \newtheorem{lem}{\protect\lemmaname}
  \theoremstyle{definition}
  \newtheorem*{example*}{\protect\examplename}
  \providecommand{\definitionname}{Definition}
  \providecommand{\examplename}{Example}
  \providecommand{\lemmaname}{Lemma}
  \providecommand{\propositionname}{Proposition}
  \providecommand{\remarkname}{Remark}
\providecommand{\theoremname}{Theorem}
\begin{document}

\title{An Autonomous Dynamical System Captures all LCSs in Three-Dimensional
Unsteady Flows }

\author{David Oettinger}

\affiliation{Institute of Mechanical Systems, ETH Zürich, Leonhardstrasse 21,
8092 Zürich, Switzerland}

\author{George Haller}

\email{georgehaller@ethz.ch}

\affiliation{Institute of Mechanical Systems, ETH Zürich, Leonhardstrasse 21,
8092 Zürich, Switzerland}
\begin{abstract}
Lagrangian coherent structures (LCSs) are material surfaces that shape
finite-time tracer patterns in flows with arbitrary time dependence.
Depending on their deformation properties, elliptic and hyperbolic
LCSs have been identified from different variational principles, solving
different equations. Here we observe that, in three dimensions, initial
positions of all variational LCSs are invariant manifolds of the same
autonomous dynamical system, generated by the intermediate eigenvector
field, $\xi_{2}(x_{0})$, of the Cauchy-Green strain tensor. This
$\xi_{2}$-system allows for the detection of LCSs in any unsteady
flow by classic methods, such as Poincaré maps, developed for autonomous
dynamical systems. As examples, we consider both steady and time-aperiodic
flows, and use their dual $\xi_{2}$-system to uncover both hyperbolic
and elliptic LCSs from a single computation. 
\end{abstract}
\maketitle

\textbf{Tracer patterns, such as the funnel of a tornado, suggest
the emergence of coherence even in complex unsteady flows. As a mathematical
tool for analyzing the dynamics behind time-evolving tracer patterns,
Lagrangian coherent structures (LCSs) represent a generalization of
classic invariant manifolds to non-autonomous systems. In three dimensions,
the available LCS types (hyperbolic and elliptic) have been identified
from different principles. Here we observe that for any unsteady flow
in three dimensions, there is a single autonomous dynamical system
capturing all LCSs. Specifically, this dynamical system is given
by the intermediate eigenvector field of the Cauchy-Green strain tensor.
Our observation enables the identification of LCSs in any unsteady
flow by standard numerical methods for autonomous systems.}

\section{Introduction\label{sec:Introduction}}

Lagrangian coherent structures (LCSs, \citep{Haller2015}) are exceptional
surfaces of trajectories that shape tracer patterns in unsteady flows
over finite time intervals of interest. By their sustained coherence,
LCSs are observed as barriers to transport. In autonomous or time-periodic
dynamical systems, classic codimension-one invariant manifolds play
a similar role (e.g., Komolgorov-Arnold-Moser (KAM) tori \citep{Arnold1989specific}).
In the time-aperiodic and finite-time setting, this role is taken
over by LCSs as codimension-one invariant manifolds (\emph{material
surfaces}) in the \emph{extended} phase space.

Material surfaces are abundant, yet most impose no observable coherence.
LCSs are distinguished material surfaces that have exceptional impact
on nearby material surfaces.  Since various distinct mechanisms producing
such impact are known \citep{Haller2015}, no unique mathematical
approach has been available to locate all the LCSs in a given flow.
Instead, separate mathematical methods and computational algorithms
exist for the three main LCS types: hyperbolic LCSs as generalizations
of stable and unstable manifolds \citep{Haller2011a,Blazevski2014};
elliptic LCSs as generalizations of invariant tori \citep{Haller2013,Blazevski2014,Oettinger2016};
and, in two dimensions, parabolic LCSs as generalized jet cores \citep{Farazmand2014a}. 

Several works \citep{Haller2011a,Haller2013,Farazmand2014a,Blazevski2014,Oettinger2016}
have implemented properties that distinguish LCSs from generic material
surfaces by requiring the LCSs to yield a critical value for a relevant
quantity of material deformation. The criticality requirement defining,
for instance, repelling hyperbolic LCSs (generalized stable manifolds)
is that these material surfaces exert locally strongest repulsion
\citep{Blazevski2014}. Elliptic LCSs in two dimensions, on the other
hand, can be obtained as stationary curves of an averaged stretching
functional \citep{Haller2013}. For the remaining LCS types in two
and three dimensions, similar variational theories are available \citep{Haller2011a,Farazmand2014a,Blazevski2014,Oettinger2016}. 

All the variational LCS theories \citep{Haller2011a,Haller2013,Farazmand2014a,Blazevski2014,Oettinger2016}
provide particular direction fields to which initial LCS positions
must be either tangent (in two dimensions) or normal (in three dimensions).
Later LCS positions can then be constructed by forward or backward
advection under the flow map. 

In two dimensions, LCSs are simply material curves \citep{Haller2015,Haller2013,Farazmand2014a}.
Initial LCS positions can thus be identified by computing integral
curves of (time-independent) direction fields defined in the two-dimensional
phase space. Obtaining initial-time LCS surfaces in three dimensions
\citep{Blazevski2014,Oettinger2016}, on the other hand, is significantly
more complicated: One has to construct entire surfaces perpendicular
to a given three-dimensional direction field. The presently available
approach to extracting these surfaces is to sample the flow domain
using two-dimensional reference planes, and then, within each plane,
integrate direction fields that are perpendicular to the imposed LCS
normal field. This procedure typically yields a high number of integral
curves, which are candidates for intersection curves between unknown
LCSs and the respective slice of the flow domain. As a second step,
from this large collection of candidate curves, one has to identify
smaller families of curves that can be interpolated into surfaces.
Moreover, since the normal fields depend on the type of LCS, one has
to repeat this complicated analysis for each LCS type \textcolor{black}{\citep{Blazevski2014,Oettinger2016}}.

Here we observe that initial positions of all available variational
LCSs in three dimensions share a common tangent vector field: the
intermediate eigenvector field, $\xi_{2}(x_{0})$, of the right Cauchy-Green
strain tensor. This allows us to seek all LCSs in three dimensions
as invariant manifolds of the autonomous dynamical system generated
by the $\xi_{2}$-field. The evolution of the $\xi_{2}$-system takes
place in the initial configuration of the underlying non-autonomous
system, but contains averaged information about the non-autonomous
flow. The autonomous $\xi_{2}$-system is hence dual to the original
unsteady flow. Equivalently, LCS final positions are invariant manifolds
of the intermediate eigenvector field, $\eta_{2}(x_{1})$, of the
left Cauchy-Green strain tensor. 

Instead of identifying LCSs in three dimensions from various two-dimensional
direction fields \textcolor{black}{\citep{Blazevski2014,Oettinger2016}},
we therefore need to consider only a single three-dimensional direction
field. We then locate LCSs by familiar numerical methods developed
for autonomous dynamical systems.

\texttt{\textcolor{black}{}}

\section{Set-up for Lagrangian coherent structures in 3D\label{sec:setup}}

Here we briefly review the mathematical foundations for Lagrangian
coherent structures in three dimensions \citep{Haller2015}. We consider
ordinary differential equations of the form
\begin{equation}
\dot{x}=u(x,t),\quad x\in U,\quad t\in I,\label{eq:flowdef}
\end{equation}
where $U$ is a domain in the Euclidean space $\mathbb{R}^{3}$; $I$
is a time interval; $u$ is a smooth mapping from the extended phase
space $U\times I$ to $\mathbb{R}^{3}$.  The setting in (\ref{eq:flowdef})
includes time-aperiodic, non-autonomous dynamical systems for which
asymptotic limits are undefined. 

We consider a finite time interval $[t_{0},t_{1}]\subset I$ and
denote a trajectory of (\ref{eq:flowdef}) passing through a point
$x_{0}$ at time $t_{0}$ by $x(t;t_{0},x_{0})$. \textcolor{black}{For
points $x_{0}$ where the trajectory $x(t;t_{0},x_{0})$ is defined
for all times $t\in[t_{0},t_{1}]$, }we introduce the flow map $F_{t_{0}}^{t}(x_{0}):=x(t;\, t_{0},\, x_{0}).$
Denoting the support of $F_{t_{0}}^{t}$ by $D$, the flow map is
a diffeomorphism onto its image $F_{t_{0}}^{t}(D)$.   Hence the
inverse $\left(F_{t_{0}}^{t}\right)^{-1}$ exists, and, in particular,
$\left(F_{t_{0}}^{t}\right)^{-1}=F_{t}^{t_{0}}.$ 
\begin{defn}[\textbf{Material surface}]
Consider a set of initial positions forming a two-dimensional surface
$\text{\ensuremath{\mathcal{M}}}(t_{0})$ at time $t_{0}$ in $U$.
Its time-$t$ image, $\text{\ensuremath{\mathcal{M}}}(t)$, is obtained
under the flow map as 
\begin{equation}
\text{\ensuremath{\mathcal{M}}}(t)=\text{\ensuremath{F{}_{t_{0}}^{t}(\mathcal{M}}}(t_{0})).\label{eq:material_surface}
\end{equation}
The union of all time-$t$ images, $\cup_{t\in[t_{0},t_{1}]}\text{\ensuremath{\mathcal{M}}}(t)$,
is a hypersurface in the extended phase space $U\times I$, called
a\emph{ material surface}. Unless we consider a specific time-$t^{*}$
image $\text{\ensuremath{\mathcal{M}}}(t^{*})$ by fixing time to
a certain value $t^{*}\in[t_{0},t_{1}]$, we refer to the entire material
surface simply by the notation $\text{\ensuremath{\mathcal{M}}}(t)$.

\end{defn}
Any material surface is an invariant manifold in the extended phase
space $U\times I$ and, hence, cannot be crossed by integral curves
$(x(t;t_{0},x_{0}),t)$. Only special material surfaces, however,
create coherence in the phase space $U$ and hence act as observable
transport barriers. Such material surfaces are generally called \emph{Lagrangian
coherent structures} (LCSs). 

Quantifying material coherence in a general non-autonomous system
requires considering (\ref{eq:flowdef}) for a fixed time interval
$[t_{0},t_{1}]$. This reflects the observation that coherent structures
in truly unsteady flows are generally transient. (See also \citep{Haller2015}.)
Accordingly, any LCS is defined with respect to the fixed time interval
$[t_{0},t_{1}]$.\emph{ }(Thus, in applications where multiple time
intervals $[t_{0},t_{1}]$ are relevant, the LCSs need to be determined
separately for each time interval.) 

Viewed in the phase space $U$, LCSs are time-dependent surfaces,
even if the underlying dynamical system (\ref{eq:flowdef}) is autonomous.
LCS positions at different times are related via (\ref{eq:material_surface}).

In applications, even if the flow map $F_{t_{0}}^{t}$ is available
for all $t\in[t_{0},t_{1}]$, it remains challenging to detect and
parametrize all the a priori unknown LCSs. This, fortunately, need
not be done in the extended phase space: Since the flow map applied
to any LCS position $\text{\ensuremath{\mathcal{M}}}(t^{*})$ uniquely
generates any required time-$t$ image $\text{\ensuremath{\mathcal{M}}}(t)$,
we can fix the time \textcolor{black}{$t^{*}$ to an arbitrary value
in $[t_{0},t_{1}]$ and parametrize }$\text{\ensuremath{\mathcal{M}}}(t^{*})$\textcolor{black}{{}
in the phase space $U$. For simplicity, we generally choose $t^{*}=t_{0}$.}
(For attracting hyperbolic LCSs, however, it is advantageous to parametrize
$\text{\ensuremath{\mathcal{M}}}(t_{1})$ instead of $\text{\ensuremath{\mathcal{M}}}(t_{0})$,
see Sec. \ref{sub:ABCaperiodic5}.). The difficulty remains in that
almost any conceivable surface from the domain $D$ evolves incoherently
under the flow, and hence does not define an LCS $\text{\ensuremath{\mathcal{M}}}(t)$
(cf. Fig. 1). We therefore need additional properties that, for any
time-aperiodic flow, distinguish LCSs from generic material surfaces\@.
\begin{figure}[h]
\centering{}\includegraphics[width=0.8\columnwidth]{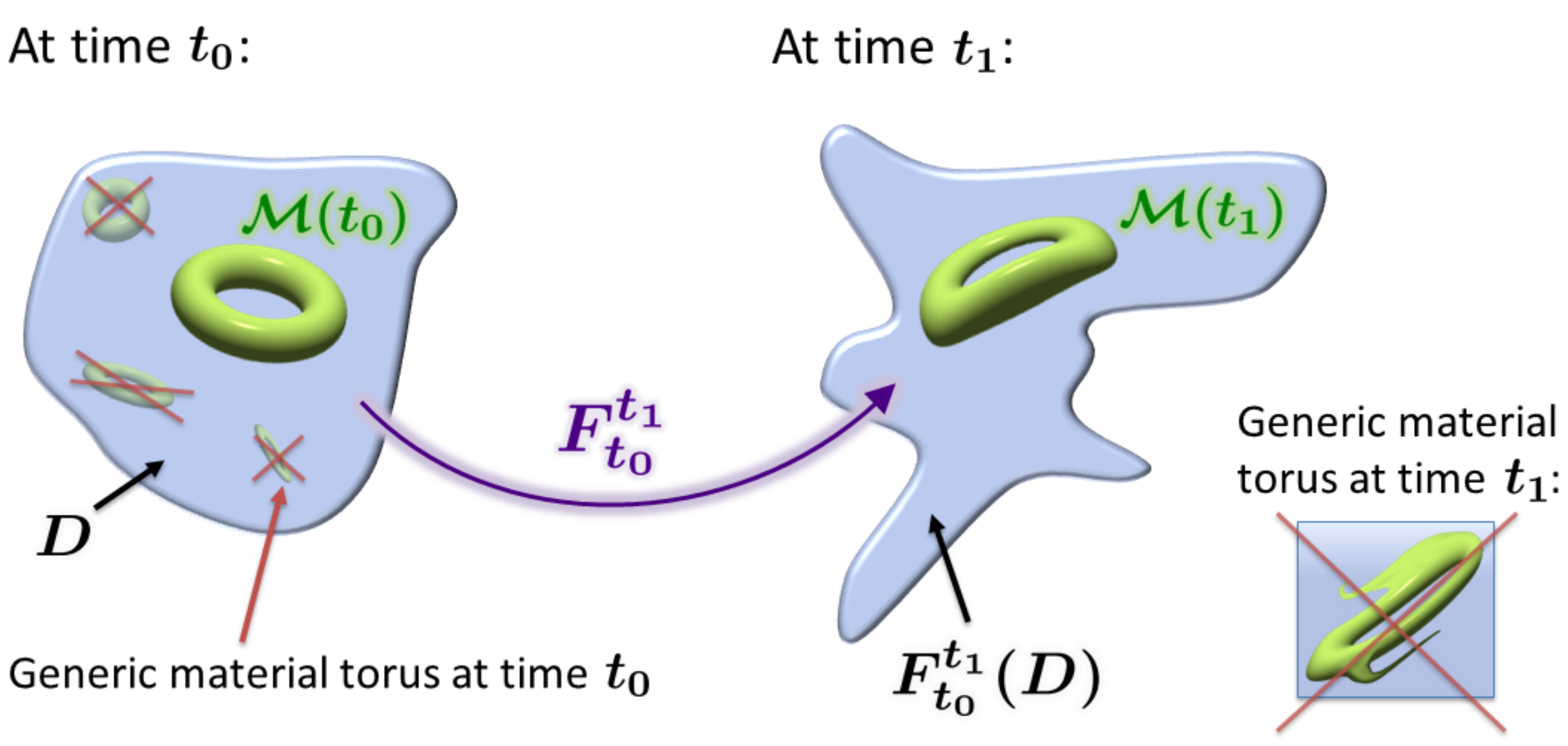}\protect\caption{Schematic of an elliptic LCS $\text{\ensuremath{\mathcal{M}}}(t)$,
obtained as a toroidal surface $\text{\ensuremath{\mathcal{M}}}(t_{0})$
in the flow domain $D$ at time $t_{0}$. Up to rotations and translations,
the time-$t_{1}$ image, $\text{\ensuremath{\mathcal{M}}}(t_{1})$,
is only moderately deformed relative to $\text{\ensuremath{\mathcal{M}}}(t_{0})$
and does not display additional features, such as filaments. (In the
context of fluid dynamics, such an LCS could capture a coherently
evolving vortex ring in a three-dimensional unsteady flow.) Generic
tori in $D$, on the other hand, are expected to evolve incoherently
under the flow $F_{t_{0}}^{t_{1}}$ and thus do not yield LCSs.}
\label{fig:LCSdetection}
\end{figure}

\section{Review of variational approaches to Lagrangian coherent structures
in 3D\label{sec:Review3d} }

Within the general class of three-dimensional flows with arbitrary
time dependence (\ref{eq:flowdef}), several types of material surfaces
can be viewed as coherently evolving. Each of them defines a distinct
type of LCS. Three LCS types have so far been identified: hyperbolic
repelling and attracting LCSs (generalized stable and unstable manifolds)
\citep{Blazevski2014}, and elliptic LCSs (generalized invariant tori
or invariant tubes) \citep{Blazevski2014,Oettinger2016}. 

Hyperbolic LCSs are locally most repelling or attracting material
surfaces \citep{Blazevski2014}. To express this property mathematically,
we introduce the normal repulsion $\rho$ of a material surface $\text{\ensuremath{\mathcal{M}}}(t)$
between times $t_{0}$ and $t_{1}$ (cf. Fig. \ref{fig:RepulsionShear}).
\begin{figure}[h]
\includegraphics[width=0.6\columnwidth]{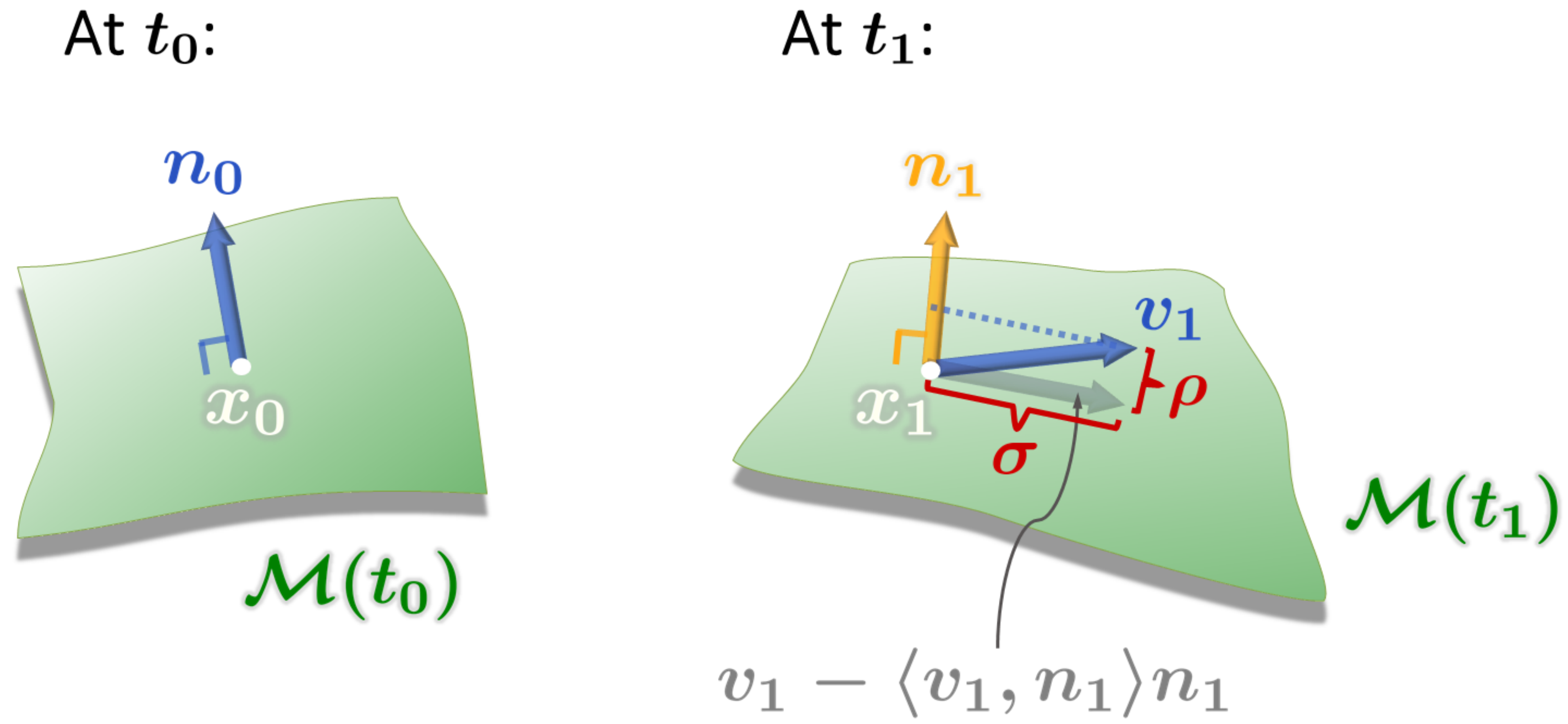}\protect\caption{Definitions of normal repulsion $\rho$, cf. (\ref{eq:repulsion}),
and the tangential shear $\sigma$, cf. (\ref{eq:shear}).}
\label{fig:RepulsionShear}
\end{figure}
 Specifically, at an arbitrary point $x_{0}$ in $\text{\ensuremath{\mathcal{M}}}(t_{0})$,
we consider a unit surface normal $n_{0}(x_{0})$: Mapping $n_{0}(x_{0})$
under the linearized flow $DF_{t_{0}}^{t_{1}}(x_{0})$ from $t_{0}$
to $t_{1}$ yields a vector $v_{1}(x_{1})=DF_{t_{0}}^{t_{1}}(x_{0})n_{0}(x_{0})$,
where $x_{1}=F_{t_{0}}^{t_{1}}(x_{0})$ is a point in $\text{\ensuremath{\mathcal{M}}}(t_{1})$.
The vector $v_{1}(x_{1})$ will generally neither be of unit length
nor perpendicular to the surface $\text{\ensuremath{\mathcal{M}}}(t_{1})$.
Denoting the unit normal of $\text{\ensuremath{\mathcal{M}}}(t_{1})$
at $x_{1}$ by $n_{1}(x_{1})$, we introduce the normal repulsion
$\rho$ as
\begin{equation}
\rho=||\left\langle v_{1},n_{1}\right\rangle n_{1}||=\left\langle v_{1},n_{1}\right\rangle ,\label{eq:repulsion}
\end{equation}
where $\left\langle .,.\right\rangle $ is the Euclidean scalar product,
and $||.||$ is the Euclidean norm. A large value of $\rho$ means
that the component of $v_{1}(x_{1})$ normal to the surface $\text{\ensuremath{\mathcal{M}}}(t_{1})$
is large and, thus, material elements that were initially aligned
with $n_{0}(x_{0})$ appear repelled from $\text{\ensuremath{\mathcal{M}}}(t_{1})$.
Similarly, if the normal component of $v_{1}(x_{1})$ is small, then
the components of $v_{1}(x_{1})$ tangent to $\text{\ensuremath{\mathcal{M}}}(t_{1})$
must be large, corresponding to attraction of material elements aligned
with $n_{0}(x_{0})$ to the surface $\text{\ensuremath{\mathcal{M}}}(t_{1})$.
Formally, we consider the normal repulsion as a function of the initial
position $x_{0}$ and the surface normal $n_{0}(x_{0})$, i.e., $\rho=\rho(x_{0},n_{0})$.
With this convention, $\text{\ensuremath{\mathcal{M}}}(t_{0})$ determines
$\rho$. We now use $\rho$ to define hyperbolic LCSs as most repelling
or attracting material surfaces:
\begin{defn}[\textbf{Repelling and attracting hyperbolic LCS \citep{Blazevski2014}}]
\label{def:hyperbolicLCS} A smooth material surface $\mathcal{M}(t)$
is a \emph{repelling (or attracting) hyperbolic LCS} if the unit normals
$n_{0}(.)$ of $\mathcal{M}(t_{0})$ maximize (or minimize) the normal
repulsion function $\rho$ among all perturbations $n_{0}(.)\mapsto\tilde{n}_{0}(.)$,
with $\tilde{n}_{0}:\mathcal{M}(t_{0})\rightarrow S^{2}$ denoting
an arbitrary unit vector field. 

We additionally require $\rho>1$ ($\rho<1$) for repelling (attracting)
hyperbolic LCSs, which is automatically satisfied for incompressible
flows.
\end{defn}

Motivated by KAM tori and coherent vortex rings in fluid flows, we
require elliptic LCSs to be tubular surfaces in the phase space. By
a tubular surface, we mean a smooth surface that is diffeomorphic
to a torus, cylinder, sphere or paraboloid. In order to capture the
most influential tubular surfaces, Fig. \ref{fig:RepulsionShear}
suggests considering elliptic LCSs as surfaces maximizing the tangential
shear $\sigma$ under perturbations to the surface normal \citep{Blazevski2014}.
This Lagrangian shear $\sigma$ is defined as
\begin{equation}
\sigma=||v_{1}-\left\langle v_{1},n_{1}\right\rangle n_{1}||=||v_{1}-\rho\, n_{1}||\label{eq:shear}
\end{equation}
(cf. Fig. \ref{fig:RepulsionShear}). We consider the tangential shear
$\sigma$ as a function of the initial position $x_{0}$ and the surface
normal $n_{0}(x_{0})$, i.e., we write $\sigma=\sigma(x_{0},n_{0})$.
\begin{defn}[\textbf{Shear-maximizing elliptic LCS \citep{Blazevski2014}}]
\label{def:shearLCS} A tubular material surface $\mathcal{M}(t)$
is an \emph{elliptic LCS} if the unit normals $n_{0}(.)$ of $\mathcal{M}(t_{0})$
maximize the tangential shear function $\sigma$ among all perturbations
$n_{0}(.)\mapsto\tilde{n}_{0}(.)$, with $\tilde{n}_{0}:\mathcal{M}(t_{0})\rightarrow S^{2}$
denoting an arbitrary unit vector field. 
\end{defn}
As pointed out in \citep{Oettinger2016}, due to ever-present numerical
inaccuracies, it is difficult to construct entire tubular surfaces
that satisfy the strict requirement of pointwise maximal shear. A
less restrictive definition of elliptic LCSs has been obtained recently
by considering material surfaces $\mathcal{M}(t)$ that stretch nearly
uniformly under the flow \citep{Oettinger2016}. Considering any point
$x_{0}$ in $\text{\ensuremath{\mathcal{M}}}(t_{0})$, the linearized
flow $DF_{t_{0}}^{t_{1}}$ maps any vector $e_{0}(x_{0})$ from the
tangent space $\text{\ensuremath{T_{x_{0}}\mathcal{M}}}(t_{0})$ to
a vector $e_{1}(x_{1})$ in $T_{x_{1}}\text{\ensuremath{\mathcal{M}}}(t_{1})$,
where $x_{1}=F_{t_{0}}^{t_{1}}(x_{0})$. We define $\mathcal{M}(t)$
as \emph{nearly uniformly stretching} at $x_{0}$ if all tangent vectors
$e_{0}(x_{0})$ satisfy 
\begin{equation}
||e_{1}(x_{1})||=\lambda(x_{0})\cdot||e_{0}(x_{0})||\text{\quad with\quad\ }\lambda(x_{0})\in[\sigma_{2}(x_{0})\cdot(1-\Delta),\sigma_{2}(x_{0})\cdot(1+\Delta)],\label{eq:nearly uniformly}
\end{equation}
where $\sigma_{2}(x_{0})$ is the intermediate singular value of $DF_{t_{0}}^{t_{1}}(x_{0})$
(introduced below, cf. (\ref{eq:df-on-xi})); and $\Delta$ is a small
stretching deviation ($0\leq\Delta\ll1$). As shown in \citep{Oettinger2016},
setting $\lambda(x_{0})=\sigma_{2}(x_{0})$ (i.e., $\Delta=0$) is
the only way to obtain a material surface that is exactly uniformly
stretching at $x_{0}$ (cf. Fig. \ref{fig:LambdaStretching}). 
\begin{figure}[h]
\includegraphics[width=0.6\textwidth]{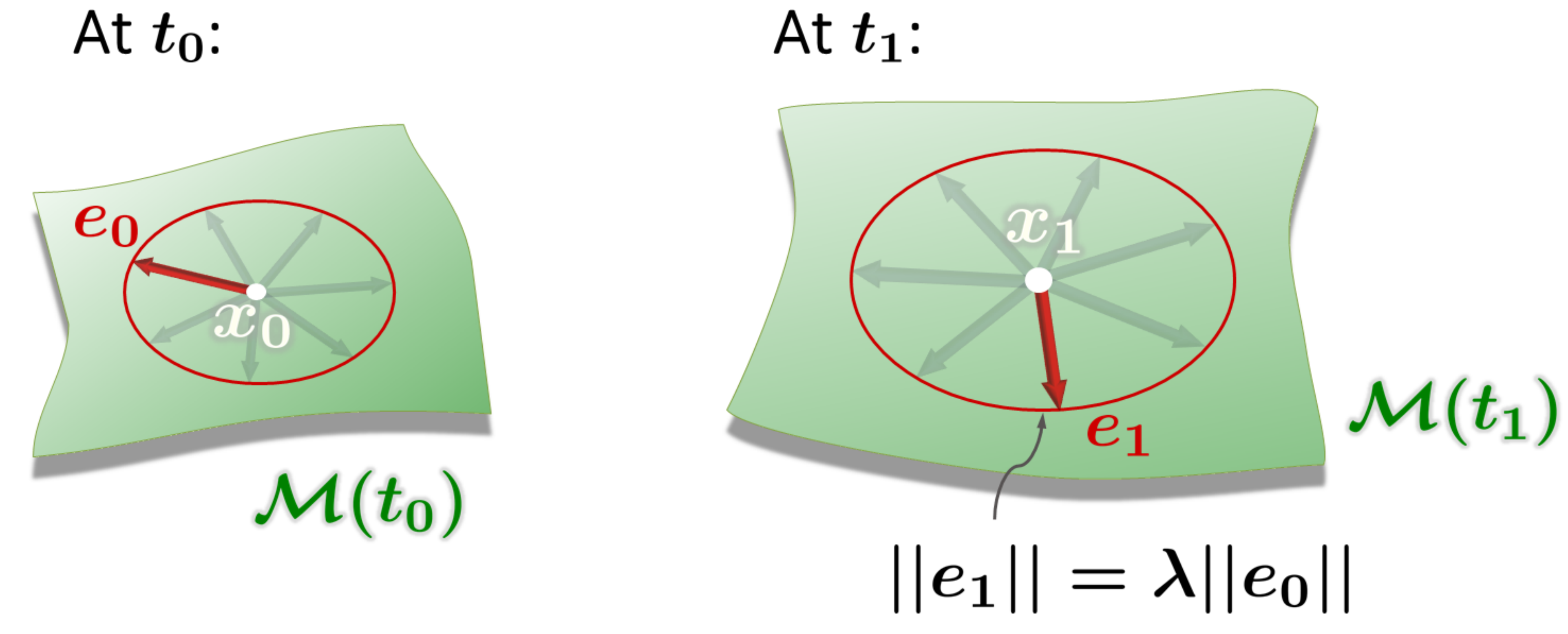} \protect\caption{Local deformation of a pointwise uniformly stretching surface (cf.
(\ref{eq:nearly uniformly})): All tangent vectors based at $x_{0}$
stretch exactly by the same factor of $\lambda(x_{0})$ between times
$t_{0}$ and $t_{1}$.}

\label{fig:LambdaStretching}
\end{figure}
 
\begin{defn}[\textbf{Near-uniformly stretching elliptic LCS \citep{Oettinger2016}}]
\label{def:ellipticLCS}A tubular material surface $\mathcal{M}(t)$
is an \emph{elliptic LCS} if it is \emph{nearly uniformly stretching}
at any point in $\text{\ensuremath{\mathcal{M}}}(t_{0})$. \end{defn}
\begin{rem}
In \citep{Oettinger2016}, the stretching deviation $\Delta$ is chosen
to be constant on $\text{\ensuremath{\mathcal{M}}}(t_{0})$. We could,
however, let $\Delta$ vary on $\text{\ensuremath{\mathcal{M}}}(t_{0})$
and still obtain valid elliptic LCSs (as long as $0\leq\Delta\ll1$).
Requiring exact uniform stretching ($\Delta=0$) would be similarly
restrictive as requiring maximal tangential shear (cf. Definition
\ref{def:shearLCS}). 
\end{rem}

\begin{rem}
Since $\sigma_{2}(x_{0})$ is given by the problem and generally not
a constant function, the factor $\lambda=\lambda(x_{0})$ varies within
the surface $\text{\ensuremath{\mathcal{M}}}(t_{0})$ even when $\Delta=0$.
In two dimensions, however, it is possible to construct elliptic LCSs
that stretch by a factor $\lambda$ that is constant on $\text{\ensuremath{\mathcal{M}}}(t_{0})$
\citep{Haller2013}. 
\end{rem}

\begin{rem}
Other types of distinguished material surfaces revealing elliptic
LCSs are level sets of the polar rotation angle \citep{farazmand2015polar}
and level sets of the Lagrangian-averaged vorticity \citep{haller2015defining}.
These approaches are based on the notion of rotational coherence rather
than stretching, and are hence not directly related to the variational
approaches we review here.
\end{rem}

From the linearization of the flow map $F_{t_{0}}^{t_{1}}$, we can
derive explicit geometric conditions for both hyperbolic and elliptic
LCSs (Definitions \ref{def:hyperbolicLCS}--\ref{def:ellipticLCS}).
These conditions are expressible in terms of eigenvectors and eigenvalues
of the left and right Cauchy-Green strain tensors (cf. Remark \ref{remark:Cauchy-Green}
below). A fully equivalent, yet simpler picture is provided by the
singular-value decomposition (SVD) of the the linearized flow map
$DF_{t_{0}}^{t_{1}}(x_{0})$: The linearized flow map $DF_{t_{0}}^{t_{1}}(x_{0})$
(also called deformation gradient) maps vectors from the tangent space
at $x_{0}$ onto their time-$t_{1}$ images in the tangent space
at the point $x_{1}=F_{t_{0}}^{t_{1}}(x_{0})$. (Since the flow domain
$U$ is in the Euclidean space $\mathbb{R}^{3}$, each of these tangent
spaces is simply $\mathbb{R}^{3}$ as well.)  In particular, $DF_{t_{0}}^{t_{1}}(x_{0})$
maps its three right-singular vectors $\xi_{1,2,3}(x_{0})$ onto its
three left-singular vectors $\eta_{1,2,3}(x_{1})$, i.e., 
\begin{equation}
DF_{t_{0}}^{t_{1}}(x_{0})\xi_{i}(x_{0})=\sigma_{i}(x_{0})\cdot\eta_{i}(x_{1}),\qquad i=1,2,3,\label{eq:df-on-xi}
\end{equation}
see Fig. \ref{fig:svd} and \citep{TrefethenBauSpecific}. 
\begin{figure}[h]
\centering{}\includegraphics[width=0.6\columnwidth]{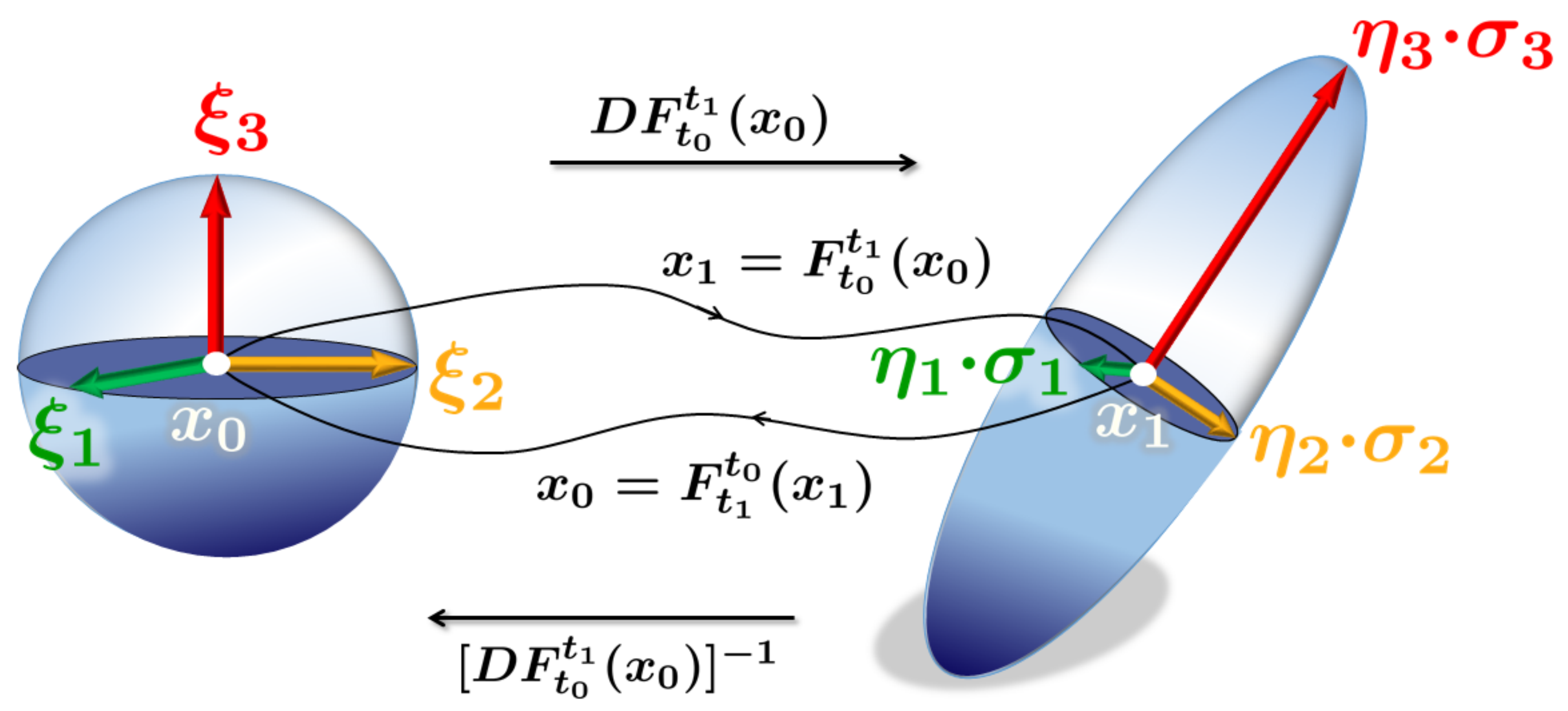}\protect\caption{The deformation gradient $DF_{t_{0}}^{t_{1}}$ mapping its right-singular
vectors $\xi_{1,2,3}$ onto its left-singular vectors $\eta_{1,2,3}$.}
\label{fig:svd}
\end{figure}
 The singular vectors $\xi_{1,2,3}(x_{0})$ and the $\eta_{1,2,3}(x_{1})$
are unit vectors. Both the $\xi_{1,2,3}(x_{0})$ and the $\eta_{1,2,3}(x_{1})$
define an orthonormal basis of $\mathbb{R}^{3}$. The stretch factors
$\sigma_{1,2,3}(x_{0})$ in (\ref{eq:df-on-xi}) are the singular
values of $DF_{t_{0}}^{t_{1}}(x_{0})$, which we assume to be distinct
and ordered so that 
\begin{equation}
0<\sigma_{1}(x_{0})<\sigma_{2}(x_{0})<\sigma_{3}(x_{0}).\label{eq:svals_ascending}
\end{equation}
The available LCS definitions \citep{Blazevski2014,Oettinger2016}
do not consider points where two singular values are equal. 

We illustrate the kinematic role of the right-singular vectors $\xi_{1,2,3}(x_{0})$
by considering the stretch factor of a vector $v(x_{0})$, defined
as 
\begin{equation}
\Lambda_{t_{0}}^{t_{1}}(x_{0},v(x_{0}))=\frac{\left\Vert DF_{t_{0}}^{t_{1}}(x_{0})\, v(x_{0})\right\Vert }{||v(x_{0})||}.\label{eq:stretch-ratio}
\end{equation}

\emph{ }Since\emph{ $\sigma_{1}(x_{0})<\sigma_{2}(x_{0})<\sigma_{3}(x_{0})$,}
any vector $v(x_{0})$ parallel to $\xi_{3}(x_{0})$ maximizes the
stretch factor $\Lambda_{t_{0}}^{t_{1}}(x_{0},.)$ among all vectors
from $\mathbb{R}^{3}$. The direction $\xi_{1}(x_{0})$, on the other
hand, minimizes $\Lambda_{t_{0}}^{t_{1}}(x_{0},.)$. We thus refer
to the (right-) singular vector $\xi_{2}(x_{0})$ as the \emph{intermediate}
(right-) singular vector of $DF_{t_{0}}^{t_{1}}(x_{0})$. In many
applications, the flow $F_{t_{0}}^{t_{1}}$ is volume-preserving (incompressible).
Incompressibility means that $\sigma_{1}\sigma_{2}\sigma_{3}=1$ holds
everywhere. Together with $0<\sigma_{1}<\sigma_{2}<\sigma_{3}$, this
implies that $\sigma_{2}$ is the singular value closest to unity
(cf. Appendix \ref{sec:sigma2_unity}). Accordingly, $\xi_{2}$ is
the singular vector closest to unit stretching (i.e., $\Lambda_{t_{0}}^{t_{1}}=1$).

The backward-time flow map $F_{t_{1}}^{t_{0}}$ yields a similar interpretation
for the left-singular vectors $\eta_{1,2,3}(x_{1})$: The backward-time
deformation gradient, $DF_{t_{1}}^{t_{0}}(x_{1})$, satisfies $DF_{t_{1}}^{t_{0}}(x_{1})=\left[DF_{t_{0}}^{t_{1}}(x_{0})\right]^{-1}$.
The right-singular vectors of $DF_{t_{1}}^{t_{0}}(x_{1})$ are, therefore,
precisely the vectors $\eta_{1,2,3}(x_{1})$; the left-singular vectors
of $DF_{t_{1}}^{t_{0}}(x_{1})$ are the $\xi_{1,2,3}(x_{0})$.  In
backward time, the $\eta_{1,2,3}(x_{1})$ hence play a similar role
to $\xi_{1,2,3}(x_{0})$ in forward time. With the singular values
of $DF_{t_{1}}^{t_{0}}(x_{1})$ being $[\sigma_{1,2,3}(x_{0})]^{-1}$,
it is, however, the vector $\eta_{1}(x_{1})$ that maximizes $\Lambda_{t_{1}}^{t_{0}}.$
This means, the direction of largest stretching in backward time is
$\eta_{1}(x_{1})$. Similarly, the vector $\eta_{3}(x_{1})$ coincides
with the direction of least stretching in backward time; and $\eta_{2}(x_{1})$
is the intermediate (right-) singular vector of $DF_{t_{1}}^{t_{0}}(x_{1})$. 
\begin{rem}
\label{remark:Cauchy-Green}By introducing the right Cauchy-Green
strain tensor 
\begin{equation}
C_{t_{0}}^{t_{1}}(x_{0})=\left[DF_{t_{0}}^{t_{1}}(x_{0})\right]^{T}DF_{t_{0}}^{t_{1}}(x_{0})\,,
\end{equation}
 where the $T$-superscript indicates transposition, we recover the
singular vectors $\xi_{1,2,3}(x_{0})$ as eigenvectors of $C_{t_{0}}^{t_{1}}(x_{0})$.
The associated eigenvalues of $C_{t_{0}}^{t_{1}}(x_{0})$ are $\lambda_{1,2,3}(x_{0})=[\sigma_{1,2,3}(x_{0})]^{2}.$
Similarly, introducing the left Cauchy-Green strain tensor \citep{marsdenhughes1994specific}
as 
\begin{equation}
B_{t_{0}}^{t_{1}}(x_{1})=DF_{t_{0}}^{t_{1}}(x_{0})\left[DF_{t_{0}}^{t_{1}}(x_{0})\right]^{T},
\end{equation}
 where $x_{0}=F_{t_{1}}^{t_{0}}(x_{1})$, the left-singular vectors
$\eta_{1,2,3}(x_{1})$ are the eigenvectors of $B_{t_{0}}^{t_{1}}(x_{1})$.
The use of $C_{t_{0}}^{t_{1}}$ and $B_{t_{0}}^{t_{1}}$ is a common
approach in the LCS literature \citep{Haller2015,HallerSapsis2011}.
As it is, however, numerically advantageous to use SVD instead of
eigendecomposition \citep{Watkins2005NoEIGButSVD,karrasch2015attracting},
we will not use the Cauchy-Green strain tensors here. 
\end{rem}

From the above it follows that the hyperbolic LCSs introduced in Definition
\ref{def:hyperbolicLCS} can be specified in terms of the vectors
$\xi_{1}(x_{0})$, $\xi_{3}(x_{0})$ (or $\text{\ensuremath{\eta}}_{1}(x_{1})$,
$\text{\ensuremath{\eta}}_{3}(x_{1})$). (For a proof, see \citep{Blazevski2014},
Appendix C.)
\begin{prop}
\label{prop:repellingLCS} A smooth material surface is a repelling
hyperbolic LCS if its time-$t_{0}$ position is everywhere normal
to the direction $\xi_{3}$ of largest stretching in forward time;
or, if its time-$t_{1}$ position is everywhere normal to the direction
$\eta_{3}$ of least stretching in backward time. 
\end{prop}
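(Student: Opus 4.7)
The plan is to reduce the variational Definition \ref{def:hyperbolicLCS} to a standard quadratic optimization over the unit sphere, solvable by spectral theory. I would proceed in the following steps.

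First, I would derive an explicit formula for $\rho(x_0,n_0)$ in terms of $DF_{t_0}^{t_1}(x_0)$ only. The key kinematic identity is that the normal $n_1$ to $\mathcal{M}(t_1)$ at $x_1=F_{t_0}^{t_1}(x_0)$ is, up to sign, proportional to $[DF_{t_0}^{t_1}(x_0)]^{-T}n_0$. This follows because tangent vectors to $\mathcal{M}(t_0)$ are pushed forward by $DF_{t_0}^{t_1}$, and a vector $n_1$ orthogonal to every $DF_{t_0}^{t_1} e$ with $e\perp n_0$ must satisfy $[DF_{t_0}^{t_1}]^T n_1\parallel n_0$. Normalizing, one gets
\begin{equation}
n_1(x_1)=\frac{[DF_{t_0}^{t_1}(x_0)]^{-T}n_0(x_0)}{\|[DF_{t_0}^{t_1}(x_0)]^{-T}n_0(x_0)\|}.
\end{equation}
Substituting this and $v_1=DF_{t_0}^{t_1}n_0$ into (\ref{eq:repulsion}), and using $\langle DF\,n_0,DF^{-T}n_0\rangle=n_0^T(DF^T DF^{-T})n_0=\|n_0\|^2=1$, I obtain the clean expression
\begin{equation}
\rho(x_0,n_0)=\frac{1}{\|[DF_{t_0}^{t_1}(x_0)]^{-T}n_0\|}.
\end{equation}

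Second, I would recast the maximization. Writing $\|[DF_{t_0}^{t_1}]^{-T}n_0\|^2=\langle n_0,(DF^T DF)^{-1}n_0\rangle$, maximizing $\rho$ over unit $n_0$ is equivalent to minimizing the quadratic form $n_0\mapsto\langle n_0,(DF^T DF)^{-1}n_0\rangle$ over the unit sphere. By the Rayleigh--Ritz theorem, its minimum equals the smallest eigenvalue of $(DF^T DF)^{-1}$, i.e. $\sigma_3^{-2}$, and is attained precisely when $n_0$ is parallel to the eigenvector of $DF^T DF$ associated with $\sigma_3^2$, namely $\xi_3(x_0)$. Thus $n_0=\xi_3$ is the unique (up to sign) maximizer, giving $\rho_{\max}=\sigma_3(x_0)$, and in the incompressible case $\sigma_1\sigma_2\sigma_3=1$ together with $\sigma_3>\sigma_2>\sigma_1>0$ automatically yields $\rho_{\max}>1$, fulfilling the extra requirement in Definition \ref{def:hyperbolicLCS}. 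This proves the first assertion.

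Third, to convert the condition to time $t_1$, I would push the maximizer forward. Using $DF_{t_0}^{t_1}\xi_i=\sigma_i\eta_i$, we have $[DF_{t_0}^{t_1}]^T\eta_i=\sigma_i\xi_i$ and hence $[DF_{t_0}^{t_1}]^{-T}\xi_i=\sigma_i^{-1}\eta_i$. Applied to $n_0=\xi_3$ and normalized, this gives $n_1=\eta_3(x_1)$. Finally, since $DF_{t_1}^{t_0}=[DF_{t_0}^{t_1}]^{-1}$ has singular values $\sigma_i^{-1}$ with right-singular vectors $\eta_i$, the direction of \emph{least} stretching in backward time is $\eta_3$, as asserted. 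The two characterizations are thus equivalent.

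I do not anticipate a serious obstacle: the only subtle point is the derivation of the normal-transport formula in the first step, and ensuring the sign/orientation ambiguity is harmless (which it is, because $\rho$ depends only on $n_0$ up to sign). Everything else is a direct application of SVD and Rayleigh--Ritz.
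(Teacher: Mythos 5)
Your proof is correct and follows essentially the same route as the reference the paper cites for this proposition (\citep{Blazevski2014}, Appendix C): derive $\rho(x_0,n_0)=1/\bigl\|[DF_{t_0}^{t_1}(x_0)]^{-T}n_0\bigr\|$ from the inverse-transpose transport of unit normals, minimize the associated Rayleigh quotient $\langle n_0,(C_{t_0}^{t_1})^{-1}n_0\rangle$ over the unit sphere to single out $n_0=\xi_3$ with $\rho_{\max}=\sigma_3>1$, and push forward via the SVD to identify $n_1=\eta_3$ as the direction of least backward-time stretching. Phrasing the last step through the SVD of $DF_{t_1}^{t_0}$ rather than the left Cauchy--Green tensor is a cosmetic variant of the cited argument; there is no gap.
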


\begin{prop}
\label{prop:attractingLCS}A smooth material surface is an attracting
hyperbolic LCS if its time-$t_{0}$ position is everywhere normal
to the direction $\xi_{1}$ of least stretching in forward time; or,
if its time-$t_{1}$ position is everywhere normal to the direction
$\eta_{1}$ of largest stretching in backward time. 
\end{prop}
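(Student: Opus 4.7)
The plan is to mirror the argument for Proposition~\ref{prop:repellingLCS} by reducing the attracting case to a pointwise minimization of the normal repulsion, and then to recover the $\eta_1$-characterization either by transporting the optimal normal to time $t_1$ or by passing to backward time.

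First I would recast $\rho$ as a function of $n_0$ alone. Since $DF_{t_0}^{t_1}(x_0)$ maps the tangent plane of $\mathcal{M}(t_0)$ at $x_0$ onto the tangent plane of $\mathcal{M}(t_1)$ at $x_1$, the unit normal at $x_1$ must satisfy
\begin{equation*}
n_1 \;=\; \frac{(DF_{t_0}^{t_1}(x_0))^{-T} n_0}{\bigl\|(DF_{t_0}^{t_1}(x_0))^{-T} n_0\bigr\|}.
\end{equation*}
Substituting this into (\ref{eq:repulsion}) and using the elementary identity $(DF_{t_0}^{t_1})^{T}(DF_{t_0}^{t_1})^{-T}=I$ collapses the inner product to
\begin{equation*}
\rho(x_0,n_0) \;=\; \frac{1}{\bigl\|(DF_{t_0}^{t_1}(x_0))^{-T} n_0\bigr\|}.
\end{equation*}

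Next I would expand $n_0 = a_1\xi_1 + a_2\xi_2 + a_3\xi_3$ with $a_1^2+a_2^2+a_3^2=1$. From (\ref{eq:df-on-xi}) one reads $(DF_{t_0}^{t_1})^{-T}\xi_i = \sigma_i^{-1}\eta_i$, and orthonormality of the $\eta_i$ gives
\begin{equation*}
\bigl\|(DF_{t_0}^{t_1})^{-T} n_0\bigr\|^{2} \;=\; \frac{a_1^{2}}{\sigma_1^{2}} + \frac{a_2^{2}}{\sigma_2^{2}} + \frac{a_3^{2}}{\sigma_3^{2}}.
\end{equation*}
By (\ref{eq:svals_ascending}), this quadratic form on the unit sphere is maximized exclusively at $n_0 = \pm\xi_1$, with maximal value $1/\sigma_1^{2}$. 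Therefore $\rho$ attains its pointwise minimum at $n_0(x_0) = \pm\xi_1(x_0)$, with value $\sigma_1(x_0)$. Under incompressibility, $\sigma_1 < 1$ automatically, matching the sign requirement for attracting LCSs in Definition~\ref{def:hyperbolicLCS}. This yields the first characterization.

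For the $\eta_1$-characterization at time $t_1$, I would observe that at the optimum $n_0=\xi_1$, the deformation gradient sends $\mathrm{span}\{\xi_2,\xi_3\}$ onto $\mathrm{span}\{\eta_2,\eta_3\}$ by (\ref{eq:df-on-xi}), whose orthogonal complement is the $\eta_1$-direction, so $n_1 = \pm\eta_1(x_1)$. An equivalent viewpoint is that an attracting LCS in forward time is a repelling LCS in backward time, and the backward-time flow $F_{t_1}^{t_0}$ has right-singular vectors $\eta_i$ with singular values $1/\sigma_i$; applying the backward-time analogue of Proposition~\ref{prop:repellingLCS} singles out the direction of largest backward-time stretching, namely $\eta_1(x_1)$.

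The main subtlety to be careful about is that Definition~\ref{def:hyperbolicLCS} perturbs the whole unit normal \emph{field} $n_0(\cdot)$ rather than a single vector. The reduction to the pointwise optimization above is legitimate because $\rho$ is a pointwise functional of $(x_0,n_0(x_0))$ with no spatial derivatives of $n_0$, so extremality under field perturbations coincides with pointwise extremality. Integrability of the extremal $\xi_1$-normal field into a genuine material surface is then automatic: any initial surface orthogonal to $\xi_1$ may be advected by $F_{t_0}^{t}$ to yield $\mathcal{M}(t)$.
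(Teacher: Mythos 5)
Your proof is correct. The paper itself does not give a self-contained argument for this proposition (or for Proposition~\ref{prop:repellingLCS}); it defers to \citep{Blazevski2014}, Appendix~C, where the same computation is carried out in Cauchy--Green language. Your derivation is the SVD phrasing of that argument: writing $n_1=(DF_{t_0}^{t_1})^{-T}n_0/\|(DF_{t_0}^{t_1})^{-T}n_0\|$, collapsing $\rho$ to $1/\|(DF_{t_0}^{t_1})^{-T}n_0\|$, and noting $\|(DF_{t_0}^{t_1})^{-T}n_0\|^2=\sum_i a_i^2/\sigma_i^2=\langle n_0,(C_{t_0}^{t_1})^{-1}n_0\rangle$ is exactly the constrained Rayleigh-quotient problem solved in the reference, with the unique constrained minimizer of $\rho$ at $n_0=\pm\xi_1$ because the singular values are distinct. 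The reduction from a field perturbation to a pointwise one is also legitimate, as you note, since $\rho$ involves no derivatives of $n_0$. The passage to $n_1=\pm\eta_1$ via $DF_{t_0}^{t_1}\,\mathrm{span}\{\xi_2,\xi_3\}=\mathrm{span}\{\eta_2,\eta_3\}$ is clean, and you correctly identify $\eta_1$ as the direction of largest backward-time stretching through the singular values $1/\sigma_i$ of $DF_{t_1}^{t_0}$. One small caution in your closing remark: advecting a surface forward is trivial, but whether a surface everywhere normal to $\xi_1$ \emph{exists} at all is a Frobenius/integrability question about the distribution $\mathrm{span}\{\xi_2,\xi_3\}$, not automatic; this does not affect the proposition, which is conditional on such a surface being given.
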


Elliptic LCSs (cf. Definitions \ref{def:shearLCS}, \ref{def:ellipticLCS})
can be constructed similarly in terms of the $\xi_{1,2,3}(x_{0})$
(or $\text{\ensuremath{\eta}}_{1,2,3}(x_{1}))$ and the $\sigma_{1,2,3}(x_{0})$:
\begin{prop}
\label{prop:ellipticLCS_old} A smooth material surface is pointwise
shear-maximizing if its time-$t_{0}$ position is everywhere normal
to one of the two directions
\begin{equation}
\tilde{n}{}^{\pm}=\tilde{\alpha}(\sigma_{1},\sigma_{2},\sigma_{3})\,\xi_{1}\pm\tilde{\gamma}(\sigma_{1},\sigma_{2},\sigma_{3})\,\xi_{3}.\label{eq:shear_normal}
\end{equation}
Here $\tilde{\alpha},\,\tilde{\gamma}$ are positive functions of
the singular values $\sigma_{1,2,3}$. (See \citep{Blazevski2014}
for the specific expressions for $\tilde{\alpha}$ and $\tilde{\gamma}$.)\end{prop}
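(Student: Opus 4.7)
The plan is to maximize $\sigma(x_0,n_0)$ defined in (\ref{eq:shear}) pointwise over unit normals $n_0\in S^{2}$ at fixed $x_0$, and read off the optimizers from the SVD of $DF_{t_0}^{t_1}(x_0)$. First I would reduce $\sigma$ to a clean quadratic form in $n_0$ alone. Tangent vectors to $\mathcal{M}(t_0)$ push forward under $DF_{t_0}^{t_1}$, so the identity $(A\mathbf{a})\times(A\mathbf{b}) = (\det A)\,A^{-T}(\mathbf{a}\times\mathbf{b})$ gives $n_1\parallel (DF_{t_0}^{t_1})^{-T} n_0$. Combined with $\langle DF_{t_0}^{t_1} n_0,\,(DF_{t_0}^{t_1})^{-T} n_0\rangle=\|n_0\|^{2}=1$, this yields $\rho=1/\|(DF_{t_0}^{t_1})^{-T}n_0\|$, and then the Pythagorean identity $\sigma^{2}=\|v_1\|^{2}-\rho^{2}$ becomes
\begin{equation}
\sigma^{2}(x_0,n_0)=\langle n_0, C\,n_0\rangle-\frac{1}{\langle n_0, C^{-1}n_0\rangle},
\end{equation}
with $C=C_{t_0}^{t_1}(x_0)$ diagonalized in the orthonormal basis $\{\xi_1,\xi_2,\xi_3\}$ by the eigenvalues $\sigma_i^{2}$.

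Next I would expand $n_0=\alpha_1\xi_1+\alpha_2\xi_2+\alpha_3\xi_3$ and extremize under $\sum_i\alpha_i^{2}=1$ by Lagrange multipliers. Setting $P:=\sum_j\alpha_j^{2}\sigma_j^{-2}$, the stationarity conditions reduce to $\sigma_i^{2}+P^{-2}\sigma_i^{-2}=\mu$ for every $i$ with $\alpha_i\ne 0$. This is a single quadratic equation in $\sigma_i^{2}$; because $\sigma_1,\sigma_2,\sigma_3$ are distinct, at most two of the coefficients $\alpha_i$ can be nonzero. Ruling out the degenerate case $n_0=\pm\xi_i$ (where $\sigma=0$ identically), every nontrivial critical normal has exactly one vanishing coefficient, say $\alpha_i=0$. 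The product-of-roots identity $\sigma_j^{2}\sigma_k^{2}=1/P^{2}$ combined with $\alpha_j^{2}+\alpha_k^{2}=1$ then pins down
\begin{equation}
\alpha_j^{2}=\frac{\sigma_j}{\sigma_j+\sigma_k},\qquad \alpha_k^{2}=\frac{\sigma_k}{\sigma_j+\sigma_k},
\end{equation}
and a short substitution produces the compact critical value $\sigma^{2}=(\sigma_j-\sigma_k)^{2}$.

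Finally, comparing the three candidate values under the ordering (\ref{eq:svals_ascending}), the global maximum $(\sigma_3-\sigma_1)^{2}$ is attained precisely when $\alpha_2=0$; the two sign choices for the remaining coefficients yield the pair $\tilde n^{\pm}$ of (\ref{eq:shear_normal}) with $\tilde\alpha=\sqrt{\sigma_1/(\sigma_1+\sigma_3)}$ and $\tilde\gamma=\sqrt{\sigma_3/(\sigma_1+\sigma_3)}$. The main obstacle is not any single computation but the Lagrange-multiplier bookkeeping: I must rigorously establish that stationarity forces exactly one coefficient to vanish, discard the spurious critical families $\alpha_1=0$ and $\alpha_3=0$ by ranking the three critical values, and verify that no boundary or degenerate configurations are overlooked. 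Once this is in place, the SVD diagonalization makes the rest of the algebra essentially mechanical.
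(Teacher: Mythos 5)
The paper supplies no proof of Proposition~\ref{prop:ellipticLCS_old}; it simply cites \citep{Blazevski2014}, Theorem~1. Your re-derivation is correct and self-contained, and it reproduces exactly the content of the cited result. The key steps all check out: the cofactor relation gives $n_1\parallel(DF_{t_0}^{t_1})^{-T}n_0$ and hence $\rho=1/\|(DF_{t_0}^{t_1})^{-T}n_0\|$; Pythagoras then yields
\[
\sigma^{2}(x_0,n_0)=\langle n_0,C\,n_0\rangle-\frac{1}{\langle n_0,C^{-1}n_0\rangle},
\]
which diagonalizes in the $\xi_i$-basis; the Lagrange stationarity condition $\sigma_i^{2}+P^{-2}\sigma_i^{-2}=\mu$ is a single quadratic in $\sigma_i^{2}$, so distinctness of the singular values forces at most two nonzero components; the product-of-roots identity $\sigma_j^2\sigma_k^2=P^{-2}$ pins down $\alpha_j^{2}=\sigma_j/(\sigma_j+\sigma_k)$, $\alpha_k^{2}=\sigma_k/(\sigma_j+\sigma_k)$ with critical value $(\sigma_j-\sigma_k)^{2}$; and under the ordering (\ref{eq:svals_ascending}) the global maximum $(\sigma_3-\sigma_1)^{2}$ is achieved precisely when $\alpha_2=0$, giving $\tilde\alpha=\sqrt{\sigma_1/(\sigma_1+\sigma_3)}$ and $\tilde\gamma=\sqrt{\sigma_3/(\sigma_1+\sigma_3)}$. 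These match the expressions in \citep{Blazevski2014} (written there in terms of the Cauchy--Green eigenvalues $\lambda_i=\sigma_i^{2}$), so your argument is a faithful reconstruction of the cited proof rather than a genuinely different route. The only detail worth recording explicitly, which you do address in passing, is that the three one-component critical normals ($n_0=\pm\xi_i$) all give $\sigma=0$ and so cannot be maxima; with that noted, the ranking of the remaining three critical values is enough to conclude since $\sigma^2$ is continuous on the compact sphere.
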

\begin{proof}
See \citep{Blazevski2014}, Theorem 1. 
\end{proof}

\begin{prop}
\label{prop:ellipticLCS}A smooth material surface is nearly uniformly
stretching if its time-$t_{0}$ position is everywhere normal to one
of the two directions
\begin{equation}
n_{\lambda}^{\pm}=\alpha(\sigma_{1},\sigma_{2},\sigma_{3},\lambda)\,\xi_{1}\pm\gamma(\sigma_{1},\sigma_{2},\sigma_{3},\lambda)\,\xi_{3}.\label{eq:elliptic_normal}
\end{equation}
Here $\alpha,\,\gamma$ are positive functions of the singular values
$\sigma_{1,2,3}$, and $\lambda\in[\sigma_{2}(1-\Delta),\sigma_{2}(1+\Delta)]$
with $0\leq\Delta\ll1$. (See \citep{Oettinger2016} for the specific
expressions for $\alpha$ and $\gamma$.)\end{prop}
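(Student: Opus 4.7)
The plan is to work in the Cauchy-Green eigenframe $\{\xi_1,\xi_2,\xi_3\}$ and directly evaluate the stretching ratio $\|e_1\|/\|e_0\|$ for tangent vectors $e_0\in T_{x_0}\mathcal{M}(t_0)$. Writing $e_0 = a\xi_1+b\xi_2+c\xi_3$ and applying (\ref{eq:df-on-xi}) together with the orthonormality of the $\eta_i$ yields
\begin{equation*}
\|e_1\|^2 = a^2\sigma_1^2+b^2\sigma_2^2+c^2\sigma_3^2, \qquad \|e_0\|^2 = a^2+b^2+c^2.
\end{equation*}
The normality condition $e_0\perp n_\lambda^\pm$ becomes $\alpha a \pm \gamma c = 0$, which shows that the tangent plane automatically contains $\xi_2$ (which stretches by exactly $\sigma_2$) together with one complementary in-plane direction $v = \mp\gamma\xi_1+\alpha\xi_3$ orthogonal to $\xi_2$.

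Next, I would fix $\alpha$ and $\gamma$ (up to an overall normalization) by demanding that $v$ stretch by exactly $\lambda$. Direct substitution into the stretch ratio produces the algebraic relation $\alpha^2(\lambda^2-\sigma_1^2) = \gamma^2(\sigma_3^2-\lambda^2)$, which admits a positive solution for $\alpha/\gamma$ because $\lambda\in(\sigma_1,\sigma_3)$ is guaranteed by the narrow range $\lambda\in[\sigma_2(1-\Delta),\sigma_2(1+\Delta)]$ around $\sigma_2$. This step recovers the explicit $\alpha,\gamma$ of \citep{Oettinger2016} and fixes the orientation of the tangent plane pointwise.

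The core step is to bound the stretching ratio on the whole tangent plane by $\sigma_2$ and $\lambda$. I would argue that $\xi_2$ and $v$ are eigenvectors of the restriction of the Cauchy-Green operator $C_{t_0}^{t_1}$ to the tangent plane, with eigenvalues $\sigma_2^2$ and $\lambda^2$ respectively. For $\xi_2$ this is immediate since $C\xi_2 = \sigma_2^2\xi_2$ lies in the plane; for $v$ one checks
\begin{equation*}
\langle Cv,\xi_2\rangle = \langle v,C\xi_2\rangle = \sigma_2^2\langle v,\xi_2\rangle = 0,
\end{equation*}
so the projection of $Cv$ onto the plane has no $\xi_2$-component and is therefore parallel to $v$. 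The Courant-Fischer characterization applied to the resulting $2\times 2$ restriction then forces the Rayleigh quotient on the plane into $[\min(\sigma_2,\lambda)^2,\max(\sigma_2,\lambda)^2]\subset[\sigma_2^2(1-\Delta)^2,\sigma_2^2(1+\Delta)^2]$, which is exactly the near-uniform-stretching condition (\ref{eq:nearly uniformly}).

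The main obstacle I anticipate is precisely the eigenvector identification above: verifying that $v$ really is an eigenvector of the restricted operator is pivotal, because without it the quadratic form $\|e_1\|^2-\lambda^2\|e_0\|^2$ has indefinite signature on the tangent plane and carries no automatic bound. Once the two-dimensional spectral structure is in place, the explicit formulas for $\alpha$ and $\gamma$ from \citep{Oettinger2016} fall out of the normalization step by routine algebra.
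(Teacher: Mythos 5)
The paper offers no in-text argument here---it simply refers to Proposition~1 of \citep{Oettinger2016}---so there is nothing to compare line by line, but your reconstruction is the natural one and it does prove the stated implication. Working in the singular frame, noting that $\xi_2$ is automatically tangent because $\langle n_\lambda^\pm,\xi_2\rangle=0$, pairing it with the orthogonal in-plane direction $v=\mp\gamma\xi_1+\alpha\xi_3$, and then observing that the restriction of $C_{t_0}^{t_1}$ to $\mathrm{span}\{\xi_2,v\}$ is diagonal with eigenvalues $\sigma_2^2$ and $\lambda^2$ gives exactly the bound on the Rayleigh quotient needed for (\ref{eq:nearly uniformly}). The eigenvector check you flag as the potential obstacle is in fact immediate: $\xi_2$ and $v$ are orthogonal and span the plane, $\langle Cv,\xi_2\rangle=0$, so the restricted quadratic form has no cross term and $\|e_1\|^2/\|e_0\|^2$ is a convex combination of $\sigma_2^2$ and $\lambda^2$; even Courant--Fischer is more machinery than is needed. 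Two small corrections. First, requiring $v$ to stretch by $\lambda$ gives $\alpha^2(\sigma_3^2-\lambda^2)=\gamma^2(\lambda^2-\sigma_1^2)$, i.e.\ $\alpha^2/\gamma^2=(\lambda^2-\sigma_1^2)/(\sigma_3^2-\lambda^2)$, not the transposed relation you wrote; this does not affect positivity or the bound, but the formula as written would not recover the $\alpha,\gamma$ of \citep{Oettinger2016}. Second, you implicitly assume $\sigma_1<\sigma_2(1-\Delta)$ and $\sigma_2(1+\Delta)<\sigma_3$, which requires $\Delta$ small enough relative to the spectral gaps, not merely $\Delta\ll1$; this is safe under the standing assumptions but is worth stating explicitly, since otherwise $\alpha^2/\gamma^2$ could fail to be positive.
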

\begin{proof}
See \citep{Oettinger2016}, Proposition 1.
\end{proof}

\section{Main result: An autonomous dynamical system for all Lagrangian coherent
structures in 3D\label{sec:Uniform3d}}

As reviewed in Sec. \ref{sec:Review3d}, all known LCSs in three dimensions
are geometrically constrained by the singular vectors of the deformation
gradient: Repelling hyperbolic LCSs are normal to the largest singular
vector $\xi_{3}$ (Proposition \ref{prop:repellingLCS}); attracting
hyperbolic LCSs normal to the smallest singular vector $\xi_{1}$
(Proposition \ref{prop:attractingLCS}); elliptic LCSs can be obtained
as surfaces normal to certain linear combinations of $\xi_{1}$ and
$\xi_{3}$ (Propositions \ref{prop:ellipticLCS_old}, \ref{prop:ellipticLCS}).
All these definitions, therefore, pick out material surfaces $\mathcal{M}(t)$
which, at the initial time $t_{0}$, are perpendicular to a normal
field $n$ of the general form
\begin{equation}
n=a\xi_{1}+c\xi_{3},\label{eq:lincombination}
\end{equation}
with real functions $a$ and $c$. In other words, any initial LCS
surface $\mathcal{M}(t_{0})$ is normal to a linear combination of
the smallest and largest singular vector of $DF_{t_{0}}^{t_{1}}$.
Consequently, the intermediate singular vector $\xi_{2}$ must always
lie in the surface $\mathcal{M}(t_{0})$. This means, $\mathcal{M}(t_{0})$
is necessarily tangent to the $\xi_{2}$-direction field. An integral
curve of the $\xi_{2}$-direction field launched from an arbitrary
point of the surface $\mathcal{M}(t_{0})$ will, therefore, remain
confined to $\mathcal{M}(t_{0})$ upon further integration. In the
language of dynamical systems theory, we summarize this observation
as follows (cf. Fig. \ref{fig:Xi2dualCartoon}):
\begin{thm}
\label{thm:xi2_dual}The initial position $\mathcal{M}(t_{0})$ of
any hyperbolic LCS (Definition \ref{def:hyperbolicLCS}) or any elliptic
LCS (Definitions \ref{def:shearLCS}, \ref{def:ellipticLCS}) is an
invariant manifold of the autonomous dynamical system
\begin{equation}
x_{0}^{\prime}=\xi_{2}(x_{0}).\label{eq:xi2_ODE}
\end{equation}
Similarly, final positions $\mathcal{M}(t_{1})$ of hyperbolic and
elliptic LCSs are invariant manifolds of the autonomous dynamical
system 
\begin{equation}
x_{1}'=\eta_{2}(x_{1}).\label{eq:eta2_ODE}
\end{equation}
  
\begin{figure}[h]
\centering{}\includegraphics[width=0.7\columnwidth]{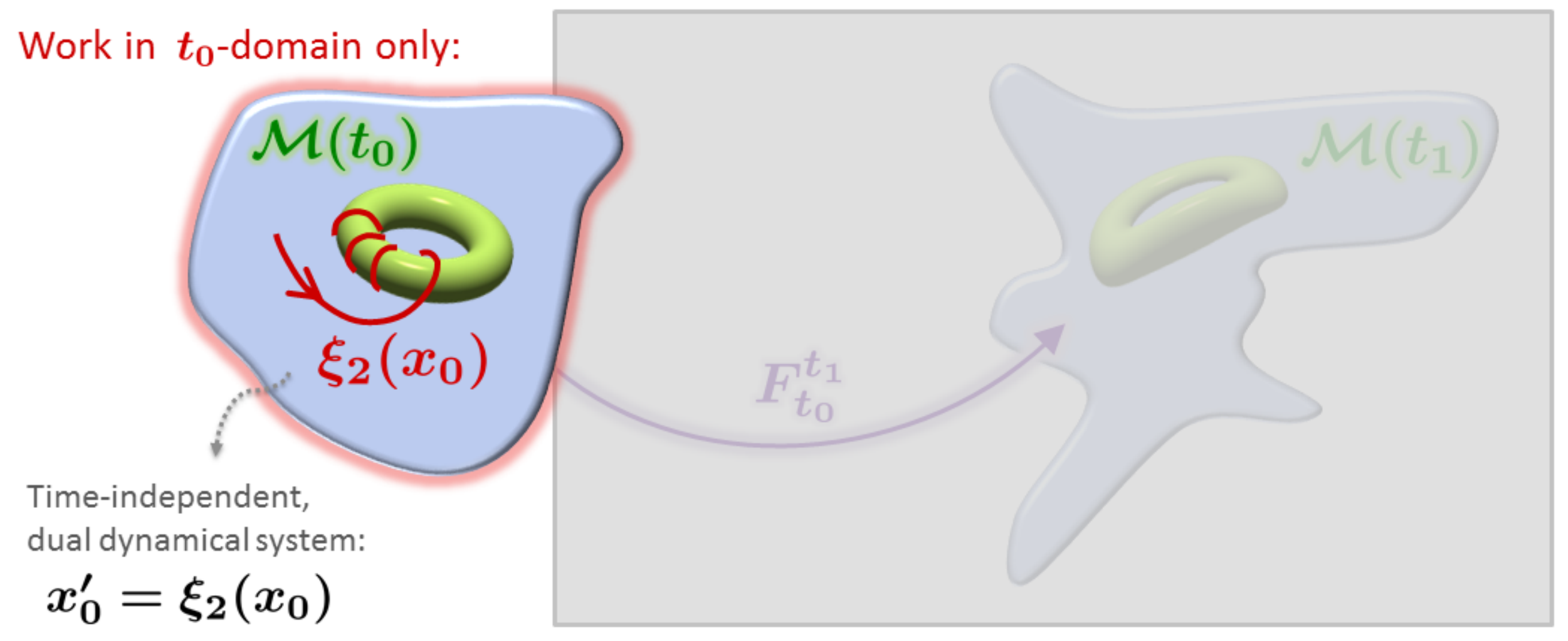}\protect\caption{Schematic of an elliptic LCS $\text{\ensuremath{\mathcal{M}}}(t)$,
revealed as a toroidal invariant manifold $\text{\ensuremath{\mathcal{M}}}(t_{0})$
of the autonomous dual dynamical system (\ref{eq:xi2_ODE}), cf. Theorem
\ref{thm:xi2_dual}.}
\label{fig:Xi2dualCartoon}
\end{figure}

\end{thm}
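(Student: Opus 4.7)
The plan is to read off the theorem directly from Propositions \ref{prop:repellingLCS}--\ref{prop:ellipticLCS}. All four propositions exhibit initial LCS positions $\mathcal{M}(t_0)$ as integral surfaces of a normal field that lies in the span of $\xi_1$ and $\xi_3$. For attracting (resp.\ repelling) hyperbolic LCSs the normal is simply $\xi_1$ (resp.\ $\xi_3$), i.e.\ the form (\ref{eq:lincombination}) with $(a,c)=(1,0)$ or $(0,1)$. For the shear-maximizing and near-uniformly-stretching elliptic LCSs, Propositions \ref{prop:ellipticLCS_old} and \ref{prop:ellipticLCS} supply the explicit coefficients $(\tilde\alpha,\pm\tilde\gamma)$ and $(\alpha,\pm\gamma)$. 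In every case the normal field has the form $n=a\xi_1+c\xi_3$, with $\xi_2$ absent.

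First I would fix a point $x_0\in\mathcal{M}(t_0)$ at which all three singular values of $DF_{t_0}^{t_1}(x_0)$ are distinct, as is standing assumption (\ref{eq:svals_ascending}). Then $\{\xi_1(x_0),\xi_2(x_0),\xi_3(x_0)\}$ is an orthonormal basis of $\mathbb{R}^3$, so any $n=a\xi_1+c\xi_3$ is orthogonal to $\xi_2(x_0)$. Because $\mathcal{M}(t_0)$ is a smooth two-dimensional surface with unit normal $n/\|n\|$ at $x_0$, its tangent plane $T_{x_0}\mathcal{M}(t_0)$ is precisely $n^{\perp}$, and we conclude $\xi_2(x_0)\in T_{x_0}\mathcal{M}(t_0)$. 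Thus $\xi_2$ is everywhere tangent to $\mathcal{M}(t_0)$, and since it is a smooth vector field tangent to a smooth codimension-one surface, any trajectory of (\ref{eq:xi2_ODE}) starting on $\mathcal{M}(t_0)$ remains on $\mathcal{M}(t_0)$ for as long as it is defined. This is exactly the statement that $\mathcal{M}(t_0)$ is an invariant manifold of the $\xi_2$-system.

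For the second half of the theorem I would transport this tangency forward via the flow. Since $\mathcal{M}(t_1)=F_{t_0}^{t_1}(\mathcal{M}(t_0))$ by (\ref{eq:material_surface}) and $F_{t_0}^{t_1}$ is a diffeomorphism, $T_{x_1}\mathcal{M}(t_1)=DF_{t_0}^{t_1}(x_0)\,T_{x_0}\mathcal{M}(t_0)$ at $x_1=F_{t_0}^{t_1}(x_0)$. Applying $DF_{t_0}^{t_1}(x_0)$ to the tangent vector $\xi_2(x_0)$ and using the SVD relation (\ref{eq:df-on-xi}) gives $\sigma_2(x_0)\,\eta_2(x_1)$, which lies in $T_{x_1}\mathcal{M}(t_1)$; dividing by the positive scalar $\sigma_2(x_0)$ shows $\eta_2(x_1)\in T_{x_1}\mathcal{M}(t_1)$. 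By the same tangency argument as above, $\mathcal{M}(t_1)$ is an invariant manifold of (\ref{eq:eta2_ODE}).

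There is no real obstacle; the content of the theorem is essentially a reorganization of the existing propositions. The only cautionary points are (i) ensuring that $\xi_2$ is unambiguously defined on $\mathcal{M}(t_0)$, which is guaranteed by the standing distinctness assumption (\ref{eq:svals_ascending}) inherited from the cited LCS definitions, and (ii) noting that $\xi_2$ is a line field rather than a true vector field, so the ODE (\ref{eq:xi2_ODE}) must be interpreted with a local orientation chosen continuously on $\mathcal{M}(t_0)$; the invariance statement is unaffected since a smooth line field tangent to a smooth surface produces the same integral curves regardless of local sign choice.
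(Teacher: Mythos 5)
Your proposal is correct and follows essentially the same route as the paper: it reads off from Propositions \ref{prop:repellingLCS}--\ref{prop:ellipticLCS} that every LCS normal lies in $\text{span}\{\xi_1,\xi_3\}$, concludes $\xi_2\perp n$ and hence $\xi_2\in T_{x_0}\mathcal{M}(t_0)$, and deduces invariance of $\mathcal{M}(t_0)$ under (\ref{eq:xi2_ODE}). Your explicit transport of tangency via $DF_{t_0}^{t_1}\xi_2=\sigma_2\eta_2$ for the $\eta_2$-system, and your remarks on the distinctness assumption (\ref{eq:svals_ascending}) and on treating $\xi_2$ as an oriented line field, simply make precise what the paper leaves implicit.
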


We refer to the autonomous systems (\ref{eq:xi2_ODE})--(\ref{eq:eta2_ODE})
as the \emph{dual dynamical systems} associated with the original,
non-autonomous system (\ref{eq:flowdef}) over the time interval $[t_{0},t_{1}]$.
The dynamics of these dual systems are not equivalent to the non-autonomous
dynamical system (\ref{eq:flowdef}). Rather, the dual systems allow
locating the LCSs associated with (\ref{eq:flowdef}) using classical
methods for autonomous dynamical systems (e.g., Poincaré maps). 

Since we usually identify LCS surfaces at the initial time $t_{0}$
(cf. Sec. \ref{sec:setup}), we will mostly discuss the $\xi_{2}$-system
(\ref{eq:xi2_ODE}). Analogous results hold for the $\eta_{2}$-system
(\ref{eq:eta2_ODE}). 
\begin{rem}
We refer to the right-hand side of (\ref{eq:xi2_ODE}) as the \emph{$\xi_{2}$-field},
to its integral curves as \emph{$\xi_{2}$-lines}, and to its invariant
manifolds as \emph{$\xi_{2}$-invariant manifolds}. Calling (\ref{eq:xi2_ODE})
a dual dynamical system guides our intuition, but requires some clarification:
For (\ref{eq:xi2_ODE}) to be well-defined, we need to locally assign
an orientation to the $\xi_{2}$-direction field. Along integral curves,
once we assign an initial orientation, this can always be done in
a smooth fashion (cf. Appendix \ref{sec:details-examples}). With
this prescription, the orientation of trajectories in the $\xi_{2}$-system
is defined unambiguously. (Since the $\xi_{2}$-vectors in (\ref{eq:xi2_ODE})
are unit vectors, here, the evolutionary variable is arclength.)
\end{rem}

Theorem \ref{thm:xi2_dual} enables locating unknown LCSs of all types
using only one equation: Any two-dimensional invariant manifold $\mathcal{S}(t_{0})$
of the $\xi_{2}$-system (\ref{eq:xi2_ODE}) is a surface that fulfills
a necessary condition (i.e., tangency to $\xi_{2}$) required for
the initial positions $\mathcal{M}(t_{0})$ of both hyperbolic and
elliptic LCSs. Since invariant manifolds of (\ref{eq:xi2_ODE}) are
already exceptional objects by themselves, any $\xi_{2}$-invariant
manifold $\mathcal{S}(t_{0})$ that we obtain for a given dynamical
system (\ref{eq:flowdef}) is a relevant candidate for an LCS surface
$\mathcal{M}(t_{0})$.

Since the LCS normals from Propositions \ref{prop:repellingLCS}--\ref{prop:ellipticLCS}
do not encompass all linear combinations of $\xi_{1}$ and $\xi_{3}$,
the converse of Theorem \ref{thm:xi2_dual} does not hold. In other
words, a $\xi_{2}$-invariant manifold $\mathcal{S}(t_{0})$ does
not necessarily correspond to an LCS $\mathcal{M}(t_{0})$.  To fully
determine whether $\mathcal{S}(t_{0})$ does satisfy one of the Definitions
\ref{def:hyperbolicLCS}--\ref{def:ellipticLCS}, therefore, one has
to verify tangency to a second vector field (cf. Appendix \ref{sec:perturbations}).
 In applications, however, it is enough to categorize an LCS candidate
qualitatively as either elliptic, hyperbolic repelling or attracting.
As seen in the examples below (cf. Sec. \ref{sec:Examples}), we can
then omit the procedure in Appendix \ref{sec:perturbations} and examine
both the topology of an LCS candidate $\mathcal{S}(t_{0})$ and its
image under the flow map, $\mathcal{S}(t_{1})$, to assess if the
material surface $\mathcal{S}(t)$ belongs to any of the three general
LCS types: Any tubular surface $\mathcal{S}(t_{0})$ is a candidate
for an elliptic LCS, any sheet-like surface $\mathcal{S}(t_{0})$
is a candidate for a hyperbolic LCSs. Mapping $\mathcal{S}(t_{0})$
under the flow map reveals if $\mathcal{S}(t)$ indeed holds up as
an elliptic or hyperbolic LCS.

As outlined in Sec. \ref{sec:Introduction}, previous approaches \citep{Blazevski2014,Oettinger2016}
locate LCSs of all the types in three dimensions (Definitions \ref{def:hyperbolicLCS}--\ref{def:ellipticLCS})
using the expressions for their surface normals from Propositions
\ref{prop:repellingLCS}--\ref{prop:ellipticLCS}. Specifically, these
methods sample the flow domain using extended families of two-dimensional
reference planes. Taking the cross product between the LCS normal
and the normal of each reference plane then defines two-dimensional
direction fields to which the unknown LCS surfaces need to be tangent.
These two-dimensional fields depend on the type of LCS; in particular,
for the near-uniformly stretching LCSs, by (\ref{eq:elliptic_normal}),
there are two parametric families of normal fields $n_{\lambda}^{\pm}$,
which need to be sampled using a dense set of $\lambda$-parameters.
Overall, therefore, one has to perform integrations of a large number
of two-dimensional direction fields. (E.g., \citep{Oettinger2016}
obtained elliptic LCSs in the steady Arnold-Beltrami-Childress from
integral curves of 1600 distinct direction fields.) Accordingly, this
procedure typically produces a large collection of possible intersection
curves between reference planes and LCSs. As a second step, these
approaches require identification of curves from this collection that
can be interpolated into LCS surfaces. Despite these efforts, the
previous approaches \citep{Blazevski2014,Oettinger2016} do not enforce
Theorem \ref{thm:xi2_dual} and hence cannot guarantee more accurate
LCS results than the present approach. An advantage is, however, that
these approaches \citep{Blazevski2014,Oettinger2016} inherently distinguish
between the specific normal fields given in Propositions \ref{prop:repellingLCS}--\ref{prop:ellipticLCS}
and hence do not require further analysis to determine the LCS type.

Clearly, opposed to the previous methods \citep{Blazevski2014,Oettinger2016}
described above, analyzing the $\xi_{2}$-system (\ref{eq:xi2_ODE})
is a conceptually simpler approach to obtaining LCSs in three dimensions:
First, the $\xi_{2}$-field is a single direction field suitable for
all types of LCSs. Secondly, as opposed to considering a large number
of independent two-dimensional equations, the $\xi_{2}$-system (\ref{eq:xi2_ODE})
is defined on a three-dimensional domain. In comparison to the methods
in \citep{Blazevski2014,Oettinger2016}, this eliminates the effort
of handling large amounts of unutilized data and eliminates possible
issues with the placement of reference planes. A full determination
of the LCS types, however, requires verifying tangency to a second
vector field (cf. Appendix \ref{sec:perturbations}).     

In two dimensions, initial positions of LCSs can be viewed as invariant
manifolds of differential equations similar to (\ref{eq:xi2_ODE}).
There, however, the available LCS types (hyperbolic, parabolic and
elliptic LCSs \citep{Haller2011a,Farazmand2014a,Haller2013}) do not
satisfy a single common differential equation: With only two right-singular
vectors $\tilde{\xi}{}_{1,2}$ in two dimensions (and no counterpart
to the intermediate eigenvector $\xi_{2}$ in three dimensions), the
initial positions of hyperbolic and parabolic LCSs are defined by
integral curves of either $\tilde{\xi}_{1}$ or $\tilde{\xi}_{2}$
\citep{Haller2011a,Farazmand2014a}. Similarly, elliptic LCSs are
limit cycles of direction fields belonging to a parametric family
of linear combinations of $\tilde{\xi}_{1}$ and $\tilde{\xi}_{2}$
\citep{Haller2013}. Therefore, there cannot be a counterpart to Theorem
\ref{thm:xi2_dual} in two dimensions. Locating the LCSs in two dimensions
requires analyzing all these differential equations separately. 

In four dimensions and higher, there are no suitable extensions to
the LCS definitions from Sec. \ref{sec:Review3d}, and hence there
is no counterpart to Theorem \ref{thm:xi2_dual} either (cf. Appendix
\ref{sec:Higher-dimensions}).

\section{Examples \label{sec:Examples}}

In this section, we consider several (steady and time-aperiodic) flows
and locate their LCSs by finding invariant manifolds of their associated
$\xi_{2}$-fields.  Our approach is to run long $\xi_{2}$-trajectories
which may asymptotically accumulate on normally attracting invariant
manifolds of the $\xi_{2}$-field (for numerical details, see Appendix
\ref{sec:details-examples}). By Theorem \ref{thm:xi2_dual}, such
invariant manifolds are candidates for time $t_{0}$-positions of
LCSs. Obtaining the LCSs as attractors in the $\xi_{2}$-system ensures
their robustness, whereas this property does not generally hold for
them in the original non-autonomous system. (For incompressible flows,
such as the examples in this section, there are no attractors at all.)

For a generally applicable numerical algorithm, a more refined method
for obtaining two-dimensional invariant manifolds in three-dimensional,
autonomous dynamical systems needs to be combined with the ideas presented
here (cf. Sec. \ref{sec:Conclusions}). We postpone these additional
steps to future work. 

We first consider steady examples where transport barriers are known
from other approaches, and hence the results obtained from the $\xi_{2}$-system
are readily verified. We then move on to an example with a temporally
aperiodic velocity field.

\subsection{Cat's eye flow\label{sub:Cats}}

In Cartesian coordinates $(x,y,z)$, consider a vector field
\begin{equation}
u(x,y,z)=\left(\begin{array}{c}
-\partial_{y}\psi(x,y)\\
\partial_{x}\psi(x,y)\\
W\circ\psi(x,y)
\end{array}\right),\label{eq:two-and-a-half-dimensional}
\end{equation}
where $W$, $\psi$ are smooth, real-valued functions, and $\psi$
is a stream function, i.e., $\Delta\psi=F(\psi)$ for some smooth
function $F$. Any velocity field $u$ satisfying (\ref{eq:two-and-a-half-dimensional})
is a solution of the Euler equations of fluid motion in three dimensions
\citep{majda2002xi2dual}. We consider the two-and-a-half-dimensional
Cat's eye flow \citep{majda2002xi2dual}, given by (\ref{eq:two-and-a-half-dimensional})
with $W(\psi)=\exp(\psi)$ and 
\begin{equation}
\psi(x,y)=-\log[c\,\cosh(y)+\sqrt{c^{2}-1}\cos(x)],\quad c=2.\label{eq:cat-psi}
\end{equation}
We assume that $u=u(x,y,z)$ is defined on the cylinder $S^{1}\times\mathbb{R}^{2}$,
with $x\in\left[0,2\text{\ensuremath{\pi}}\right)$. Because $u$
only depends on the $x,y$-coordinates here, i.e., $u=u(x,y)$, any
flow generated by a velocity field $u$ as in (\ref{eq:two-and-a-half-dimensional})
is called two-and-a-half-dimensional.

Denoting the trajectory passing through $(x_{0},y_{0},z_{0})$ at
time $t_{0}$ by $(x(t),y(t),z(t))$, the flow map takes the form
$F_{t_{0}}^{t_{1}}(x_{0},y_{0},z_{0})=(x_{0},y_{0},z_{0})^{T}+\int_{t_{0}}^{t_{1}}u(x(s),y(s))ds$.
Thus, the flow map $F_{t_{0}}^{t_{1}}$ is linear in $z_{0}$. Consequently,
the deformation gradient $DF_{t_{0}}^{t_{1}}$, its singular values
$\sigma_{1,2,3}$, and singular vectors $\xi_{1,2,3}$ do not depend
on $z_{0}$.

Identifying the coordinates of the domain $D$ of initial positions
$(x_{0},y_{0},z_{0})$ with $(x,y,z)$\textcolor{black}{, we we cannot
expect, however, that any of the $\xi_{1,2,3}$-fields will have a
vanishing $z$-component, i.e., be effectively two-dimensional. }\textbf{\textcolor{red}{}}

For the numerical integrations of the $\xi_{2}$-field (\ref{eq:xi2_ODE}),
we choose 20 representative initial conditions $p_{0}$ in the plane
$z=0$ and, imposing the initial orientation such that the $z$-component
of $\xi_{2}(p_{0})$ is positive, we compute $\xi_{2}$-lines up to
arclength $s=500$. As the time-interval, we consider $[t_{0},t_{1}]=[0,100]$.
We show the results in Fig. \ref{fig:Cat_tf100-2}, together with
level sets of $\psi$ that correspond to the values $\psi(p_{0})$.
\begin{figure}[h]
\includegraphics[width=0.45\textwidth]{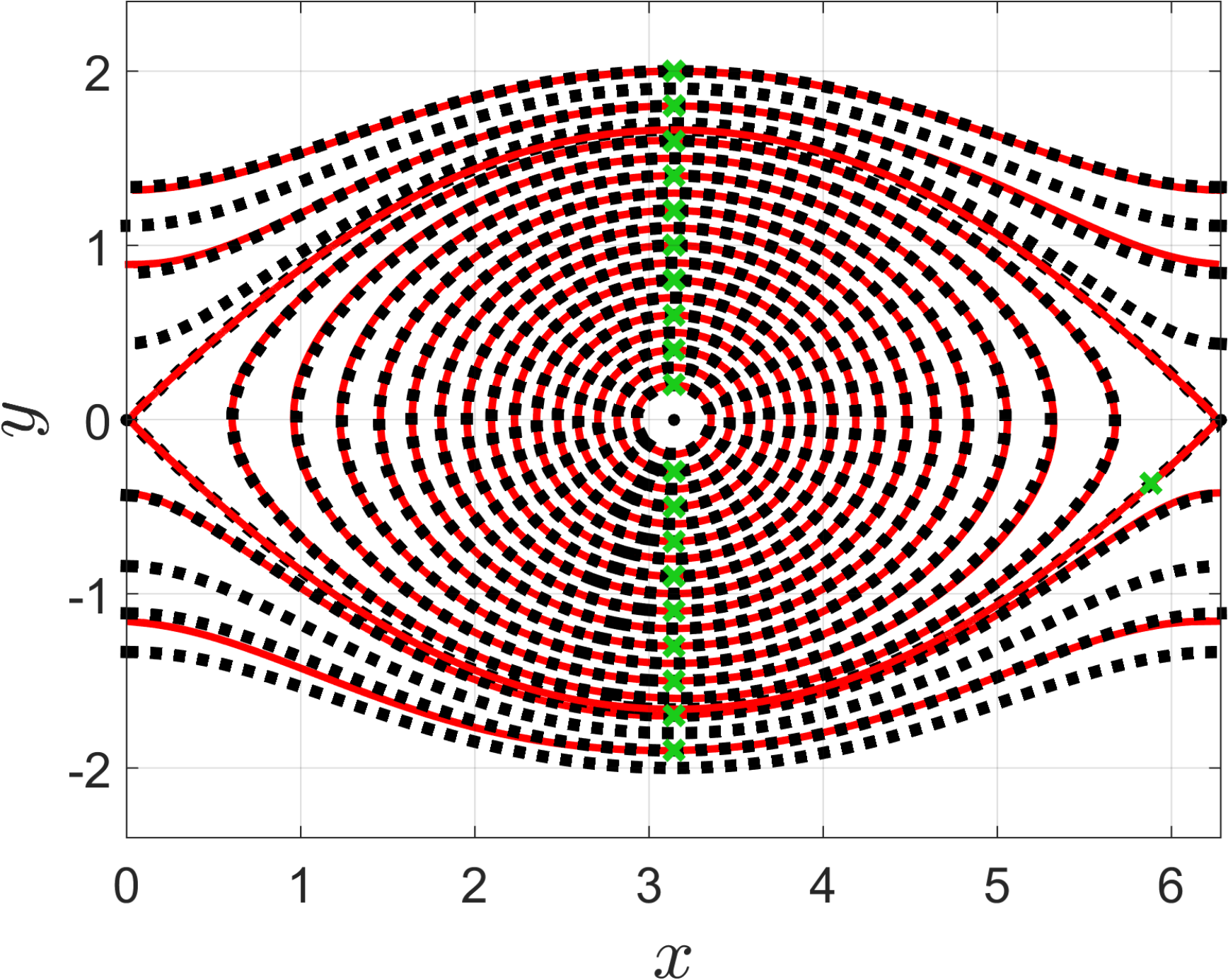}\protect\caption{Cat's eye flow: Comparison between $x,y$-projections of $\xi_{2}$-lines,
displayed for arclength $s\in[0,500]$, (solid red curves) and level
sets of the stream function $\psi$ (dotted black curves). The $\xi_{2}$-lines
have nonzero $z$-components and are confined to generalized cylinders.
The initial conditions of the $\xi_{2}$-lines, $p_{0}$, are marked
by green crosses. \label{fig:Cat_tf100-2} }
\end{figure}
 Each level set of $\psi$ defines a two-dimensional invariant manifold
of the Cat's eye flow. The $\xi_{2}$-lines are well-aligned with
the corresponding level sets of $\text{\ensuremath{\psi}}$, including
the separatrix, showing consistency between the possible locations
of LCSs and the invariant manifolds of the Cat's eye velocity field.
(We note that full alignment would require sampling the infinite-time
dynamics of the Cat's eye flow, i.e., letting $t_{1}\rightarrow\infty$
\citep{Haller2015}.) We observe that the $x,y$-projection of each
$\xi_{2}$-line is a periodic orbit, and thus, each $\xi_{2}$-line
is confined to a generalized (two-dimensional) cylinder.

\subsection{Steady ABC flow\label{sub:ABCsteady}}

Our second steady example is a fully three-dimensional solution of
the Euler equations, the steady Arnold-Beltrami-Childress (ABC) flow
\begin{equation}
u(x,y,z)=\left(\begin{array}{c}
A\sin(z)+C\cos(y)\\
B\sin(x)+A\cos(z)\\
C\sin(y)+B\cos(x)
\end{array}\right),\label{eq:ABC}
\end{equation}
with $A=\sqrt{3},\, B=\sqrt{2},\, C=1$. The coordinates in (\ref{eq:ABC})
are Cartesian, with $(x,y,z)\in[0,2\pi]^{3}$ and periodic boundary
conditions imposed in $x$, $y$ and $z$. 

Using the plane $z=0$ as a Poincaré section, and placing in it a
square grid of $20\times20$ initial positions (cf. Fig. \ref{fig:ABC_Pmaps_a}),
we integrate trajectories of (\ref{eq:ABC}) from time 0 to time $2\cdot10^{4}$.
Retaining only their long-time behavior from the time interval $[10^{4},2\cdot10^{4}]$,
we obtain a large number of iterations of the Poincaré map (cf. Fig.
\ref{fig:ABC_Pmaps_b}). The plot reveals 5 vortical regions surrounded
by a chaotic sea. Each of the vortical regions contains a family of
invariant tori that act as transport barriers.

Here we want to obtain both elliptic and hyperbolic LCSs using the
dual $\xi_{2}$-system (\ref{eq:xi2_ODE}) for $[t_{0},t_{1}]=[0,10]$.
The phase space of the $\xi_{2}$-system coincides with the domain
of (\ref{eq:ABC}). In contrast to trajectories of $u$, independently
of the time interval $[t_{0},t_{1}]$, we can run $\xi_{2}$-lines
as long as we need. Choosing the same Poincaré section and the same
grid of initial conditions as above (cf. Fig. \ref{fig:ABC_Pmaps_a}),
we integrate $\xi_{2}$-lines (initially aligned with $(0,0,1)$)
up to arclength $5\cdot10^{4}$. Retaining segments from the arclength
interval $[4\cdot10^{4},5\cdot10^{4}]$, and intersecting these segments
with the $z=0$ plane, we obtain iterations of a dual Poincaré map
(cf. Fig. \ref{fig:ABC_Pmaps_d}). 
\begin{figure}[h]
\includegraphics[width=0.95\linewidth]{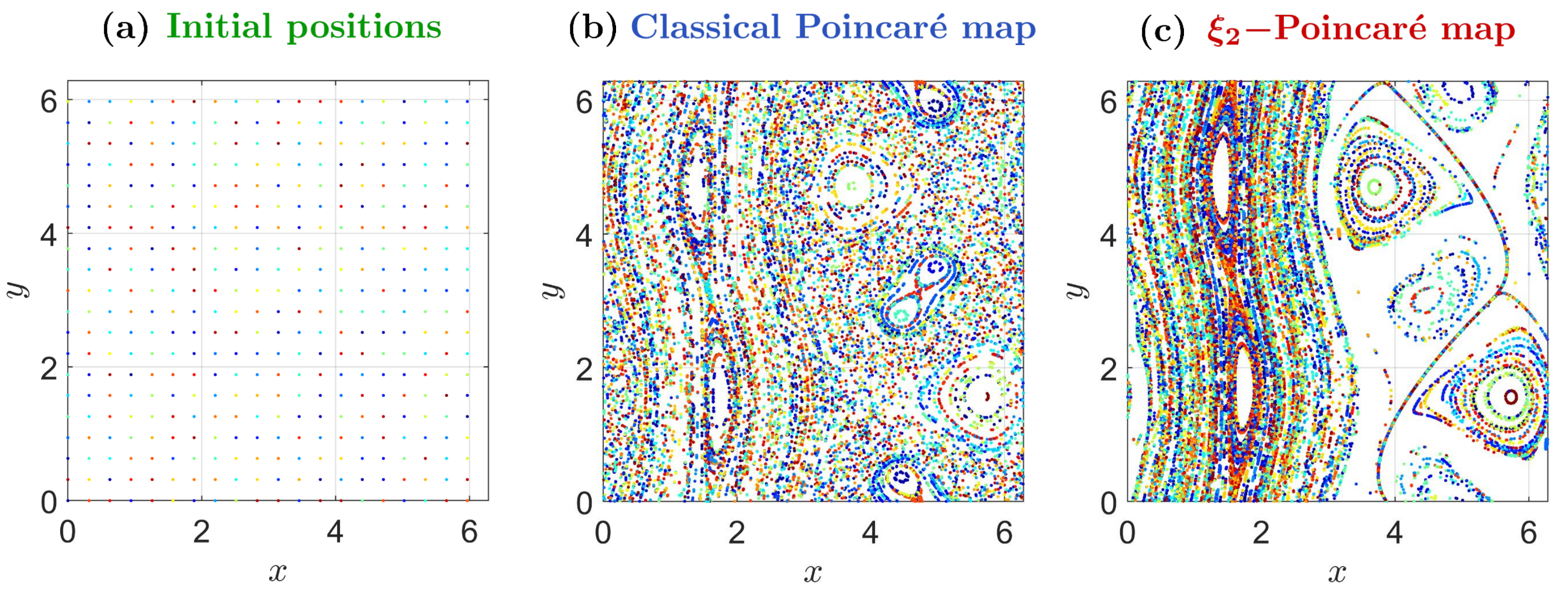}\subfloat{\label{fig:ABC_Pmaps_a}}\subfloat{\label{fig:ABC_Pmaps_b}}\subfloat{\label{fig:ABC_Pmaps_d}}\protect\caption{Steady ABC flow: Comparison of Poincaré maps at $z=0$. (a)\textbf{
}Grid of 20$\times$20 initial positions in the $z=0$-plane.\textbf{
}(b) Poincaré map of (\ref{eq:ABC}) obtained from trajectories over
$[10^{4},2\cdot10^{4}]$, indicating invariant manifolds of the ABC
flow; \textbf{} (c) Poincaré map of the $\xi_{2}$-field, obtained
from $\xi_{2}$-lines over the arclength interval $[4\cdot10^{4},5\cdot10^{4}]$,
indicating initial positions of LCSs. }
\end{figure}
 This Poincaré map indicates invariant manifolds of the dual $\xi_{2}$-system.
Specifically, the principal vortices of the ABC flow correspond to
families of invariant tori of the $\xi_{2}$-field (cf. Fig. \ref{fig:ABC_Pmaps_d}),
which are candidates for initial positions of elliptic LCSs. The tori
of the $\xi_{2}$-system are similar to the invariant tori obtained
from the classical Poincaré map (cf. Fig. \ref{fig:ABC_Pmaps_b}).
In the region corresponding to the chaotic sea, however, the $\xi_{2}$-field
is strongly dissipative and thus reveals a candidate for a transport
barrier in the ABC flow that has no counterpart in the classical Poincaré
map obtained from the asymptotic dynamics of the incompressible system
(\ref{eq:ABC}): We see a structure that has a large basin of attraction
in the dual dynamics of the $\xi_{2}$-system and, secondly, spans
the entire domain. In Sec. \ref{sub:ABCaperiodic5}, we will examine
a slightly perturbed version of this structure in detail, finding
that it is a hyperbolic repelling LCS. 

We note that computing Poincaré maps for the $\xi_{2}$-system does
not imply applying the flow map $F_{t_{0}}^{t_{1}}$ repetitively.
Iterating a $\xi_{2}$-based Poincaré map simply serves to refine
our understanding of the LCSs associated with $F_{t_{0}}^{t_{1}}$.
Indeed, the iterated Poincaré map highlights intersections of fixed
LCSs with a given plane of the $\xi_{2}$-system in more and more
detail.

\subsection{Time-aperiodic ABC-type flow\label{sub:ABCaperiodic5}}

We next use the dual $\xi_{2}$-system (\ref{eq:xi2_ODE}) to analyze
a time-aperiodic modification of the ABC flow, given by (\ref{eq:ABC})
with the replacements
\begin{equation}
\begin{aligned}B\mapsto\tilde{B}(t) & = & B+B\cdot k_{0}\tanh(k_{1}t)\cos[(k_{2}t)^{2}],\\
C\mapsto\tilde{C}(t) & = & C+C\cdot k_{0}\tanh(k_{1}t)\sin[(k_{3}t)^{2}].
\end{aligned}
\label{eq:ABCaperiodic_funcs}
\end{equation}
Neither a classical Poincaré map nor any other method requiring long
trajectories are options here, due to the temporal aperiodicity of
the system. In (\ref{eq:ABCaperiodic_funcs}), we choose $k_{0}=0.3$,
$k_{1}=0.5$, $k_{2}=1.5$ and $k_{3}=1.8$. We show the functions
$\tilde{B}(t)-B$, $\tilde{C}(t)-C$ in Fig. \ref{fig:ABC_signals}.
\begin{figure}[h]
\includegraphics[width=0.4\textwidth]{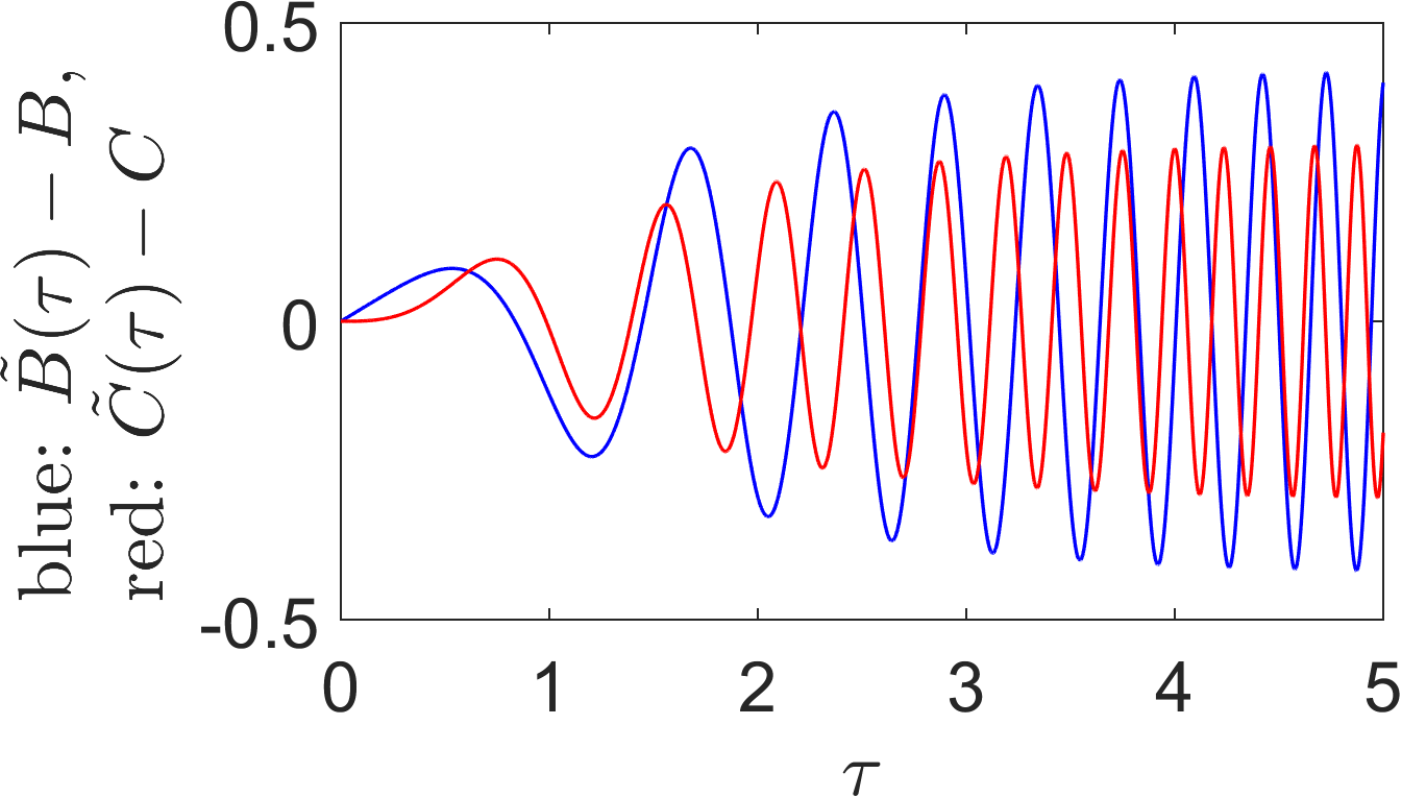}\protect\caption{Time dependence of the coefficient functions $\tilde{B}(t)$, $\tilde{C}(t)$
in (\ref{eq:ABCaperiodic_funcs}).}
\label{fig:ABC_signals}
\end{figure}
 Elliptic LCSs in similar time-aperiodic ABC-type flows have been
obtained in \citep{Blazevski2014,Oettinger2016}; hyperbolic repelling
LCSs in \citep{Blazevski2014}, although only of small extent in the
$z$-direction. 

Considering the $\xi_{2}$-system for the time interval $[t_{0},t_{1}]=[0,5]$,
we compute the dual Poincaré map (cf. Fig. \ref{fig:ABCaperiodic_xi2Pmap}).
The algorithm and numerical settings are the same as in Sec. \ref{sub:ABCsteady}.
\begin{figure}[h]
\includegraphics[width=1\columnwidth]{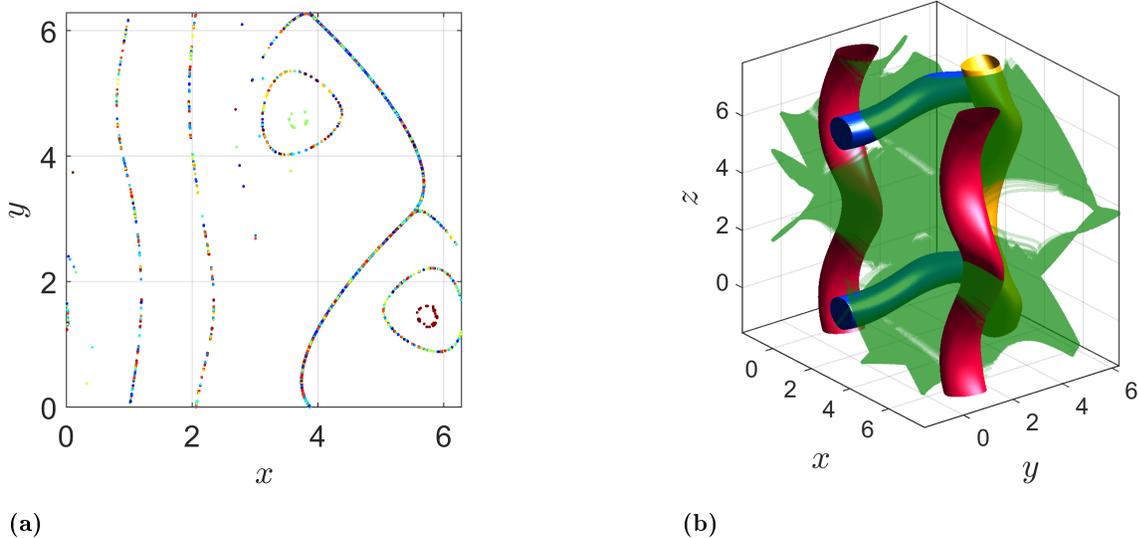}\subfloat{\label{fig:ABCaperiodic_xi2Pmap}}\hfill{}\subfloat{\label{fig:ABCaperiodic_3Dstructures}}\hfill{}\protect\caption{Time-aperiodic ABC-type flow: Arc segments of $\xi_{2}$-lines (corresponding
to arclength $s\in[4\cdot10^{4},5\cdot10^{4}]$) reveal locations
of elliptic and hyperbolic LCSs. (a) Dual Poincaré map, showing intersections
between the Poincaré section $z=0$ and possible time-$t_{0}$ locations
of elliptic and hyperbolic LCSs. (b) Possible time-$t_{0}$ locations
of elliptic and hyperbolic LCSs: The structure in green (indicating
a hyperbolic LCS) consists of segments from several $\xi_{2}$-lines.
The tubular surfaces (indicating elliptic LCSs) are fitted from point
data of individual $\xi_{2}$-lines. Here we use the periodicity of
the phase space to extend the domain slightly beyond $[0,2\pi]^{3}$. }
\end{figure}
 Compared to Fig. \ref{fig:ABC_Pmaps_d}, there are a few structures
that persist under the time-aperiodic perturbation (\ref{eq:ABCaperiodic_funcs})
to the velocity field (\ref{eq:ABC}): The large (presumably hyperbolic)
structure spanning the flow domain is still present and barely changed.
In Fig. \ref{fig:ABCaperiodic_3Dstructures}, we show $\xi_{2}$-lines
corresponding to this hyperbolic LCS candidate (green). The $\xi_{2}$-lines
indicate a complicated surface which they, however, do not cover densely.
Regarding elliptic structures, instead of entire families of $\xi_{2}$-invariant
tori, we are left with three large elliptic structures, each with
a sizable domain of attraction (cf. Fig. \ref{fig:ABCaperiodic_xi2Pmap}).
The $\xi_{2}$-lines corresponding to these elliptic structures yield
tori, which we show as tubular surfaces in Fig. \ref{fig:ABCaperiodic_3Dstructures}
(red, blue, yellow). The dual Poincaré map (Fig. \ref{fig:ABCaperiodic_xi2Pmap})
also shows that, inside two of these tori, there are additional, smaller
elliptic structures. By plotting the $\xi_{2}$-lines corresponding
to these smaller objects (not shown), we find that the surfaces they
indicate are not tori and thus ignore them in our search for LCS candidates. 

In Fig. \ref{fig:ABCaperiodic_ellLCS_t0p0}, we represent the yellow
tubular surface from Fig. \ref{fig:ABCaperiodic_3Dstructures} in
toroidal coordinates 
\begin{equation}
\begin{aligned}\bar{x} & = & (x-x_{c}(z)+R_{1})\cos(z),\\
\bar{y} & = & (x-x_{c}(z)+R_{1})\sin(z),\\
\bar{z} & = & R_{2}(y-y_{c}(z)),
\end{aligned}
\label{eq:toroidal_coords}
\end{equation}
with $R_{1}=2,\, R_{2}=1$. 
\begin{figure}[h]
\includegraphics[width=0.9\columnwidth]{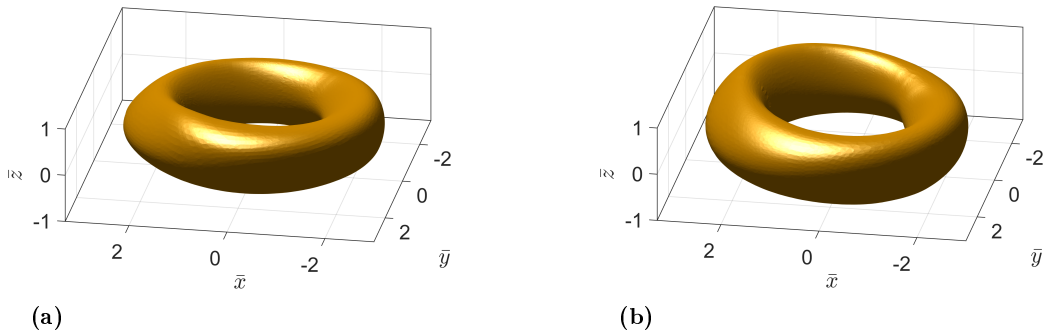}\subfloat{\label{fig:ABCaperiodic_ellLCS_t0p0}}\hfill{}\subfloat{\label{fig:ABCaperiodic_ellLCS_t5p0}}\hfill{}\protect\caption{Time-aperiodic ABC-type flow: Mapping one of the tubular surfaces
obtained from the $\xi_{2}$-lines (cf. Fig. \ref{fig:ABCaperiodic_3Dstructures},
yellow) under the flow map $F_{0}^{5}$, we confirm that this surface
is a useful elliptic LCS. (a) Elliptic LCS surface at time $t_{0}=0$.
(b) Elliptic LCS surface at time $t_{1}=5$. }
\end{figure}
 In (\ref{eq:toroidal_coords}), the functions $x_{c}(z),\, y_{c}(z)$
are the $x,$ $y$ coordinates of the (approximate) vortex center.
(For evaluating $x_{c}(z)$ and $y_{c}(z)$, we use our numerical
values from previous work \citep{Oettinger2016}.) Mapping the resulting
torus under the flow map $F_{0}^{5}$, we see that it does advect
coherently over the interval $[t_{0},t_{1}]=[0,5]$ (cf. Fig. \ref{fig:ABCaperiodic_ellLCS_t5p0}).
Therefore, even though this surface was just obtained from tangency
to $\xi_{2}$ (a necessary condition for Definition \ref{def:ellipticLCS}),
it renders a full-blown elliptic LCS. 

We next examine locally whether the complicated green structure from
Fig. \ref{fig:ABCaperiodic_3Dstructures} indeed corresponds to a
hyperbolic LCS (Definition \ref{def:hyperbolicLCS}): In Fig. \ref{fig:ABCaperiodic_repLCS_t0p0},
we take an illustrative part of the domain and interpolate a surface
from the $\xi_{2}$-lines (green). Centered around a point in the
surface, we additionally place a sphere of tracers (purple). Mapping
the two objects forward in time under the flow map $F_{0}^{1}$, we
see that the tracers deform into an ellipsoid that is most elongated
in the direction normal to the advected surface (cf. Fig. \ref{fig:ABCaperiodic_repLCS_t1p0}).
\begin{figure}[h]
\includegraphics[width=0.7\columnwidth]{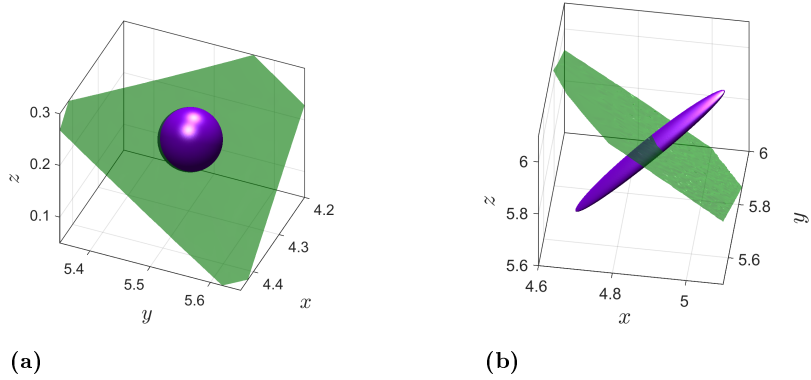}\subfloat{\label{fig:ABCaperiodic_repLCS_t0p0}}\subfloat{\label{fig:ABCaperiodic_repLCS_t1p0}}\protect\caption{ Time-aperiodic ABC-type flow: Local impact of the hyperbolic repelling
LCS surface (interpolated from $\xi_{2}$-lines). (a) Zoom-in on the
hyperbolic repelling LCS surface at time $t_{0}=0$ (green), shown
together with a sphere formed by tracers (purple). (b) Time-1 positions
of the hyperbolic repelling LCS surface and the deformed tracer sphere
(obtained under $F_{0}^{1}$).}
\end{figure}
 Considering Proposition \ref{prop:repellingLCS} and Fig. \ref{fig:svd},
we thus classify this structure as a repelling hyperbolic LCS. (For
an approach to confirming this globally, see Appendix \ref{sec:perturbations}.)
Considering Fig. \ref{fig:ABCaperiodic_3Dstructures}, we see that
this structure is much larger than the hyperbolic LCS obtained for
a similar time-aperiodic ABC-type flow in previous work (cf. \citep{Blazevski2014},
Fig. 15). 

By Theorem \ref{thm:xi2_dual}, we can also take the direction field
$\eta_{2}$ and repeat the above analysis. Using the same algorithm
and numerical parameters as for the previous $\xi_{2}$-Poincaré map
(cf. Fig. \ref{fig:ABCaperiodic_xi2Pmap}), except that we now take
the backward-time flow map $F_{5}^{0}$ instead of $F_{0}^{5}$, we
obtain a Poincaré map for the dual dynamical system $x_{1}'=\eta_{2}(x_{1})$
(cf. Fig. \ref{fig:ABCaperiodic_eta2Pmap}). 
\begin{figure}[h]
\includegraphics[width=0.4\textwidth]{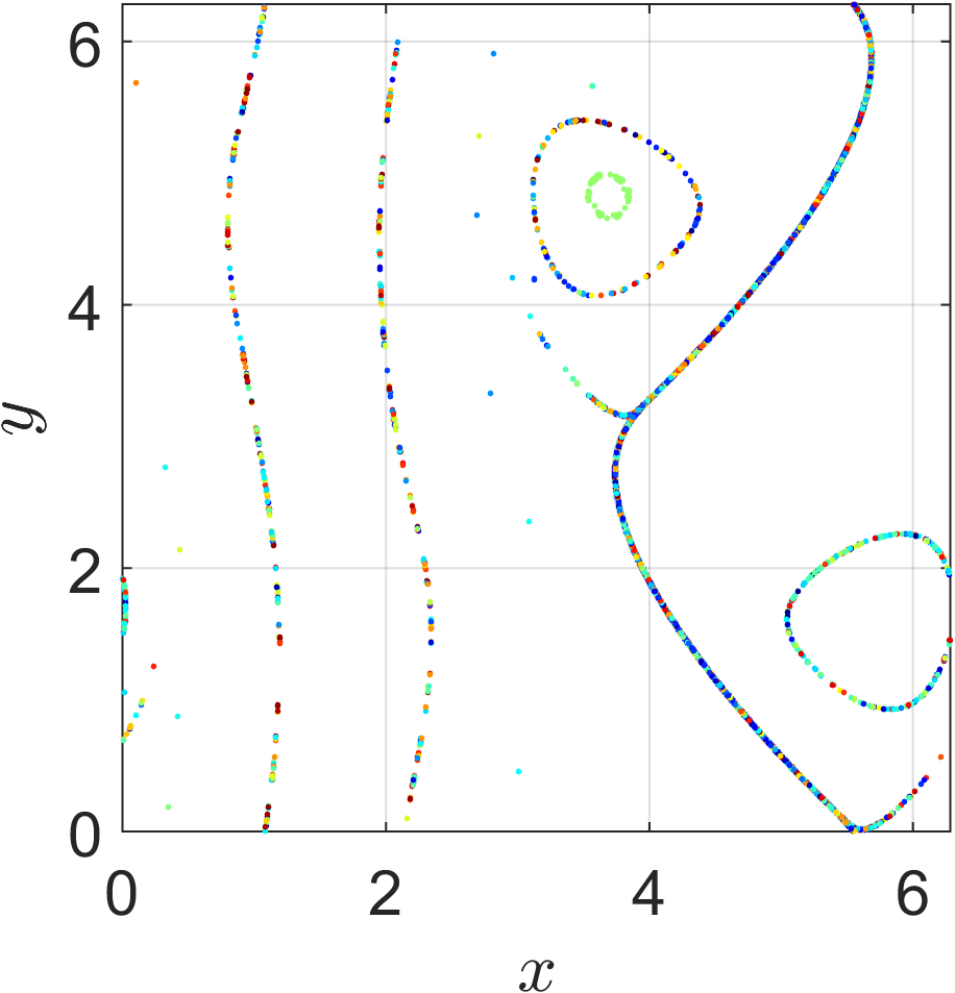}\protect\caption{Dual Poincaré map obtained from $x_{1}'=\eta_{2}(x_{1})$, showing
intersections between the Poincaré section $z=0$ and possible time-$t_{1}$
locations of elliptic and hyperbolic LCSs. }
\label{fig:ABCaperiodic_eta2Pmap}
\end{figure}
 This Poincaré map reveals possible time-$t_{1}$ positions of LCSs.
The result is similar to the $\xi_{2}$-Poincaré map (cf. Fig. \ref{fig:ABCaperiodic_xi2Pmap}),
showing again a large hyperbolic structure, and the time-$t_{1}$
positions of the tori obtained earlier (cf. Fig. \ref{fig:ABCaperiodic_3Dstructures}). 

We perform a local deformation analysis for the large hyperbolic structure
indicated by Fig. \ref{fig:ABCaperiodic_eta2Pmap}: From a sample
part of the $\eta_{2}$-lines corresponding to this structure, we
fit a surface (cf. Fig. \ref{fig:ABCaperiodic_attLCS_t5p0}, colored
green) and map it backward in time under $F_{5}^{4}$, obtaining a
surface at time $t=4$ (cf. Fig. \ref{fig:ABCaperiodic_attLCS_t4p0},
green). 
\begin{figure}[h]
\centering{}\includegraphics[width=0.7\columnwidth]{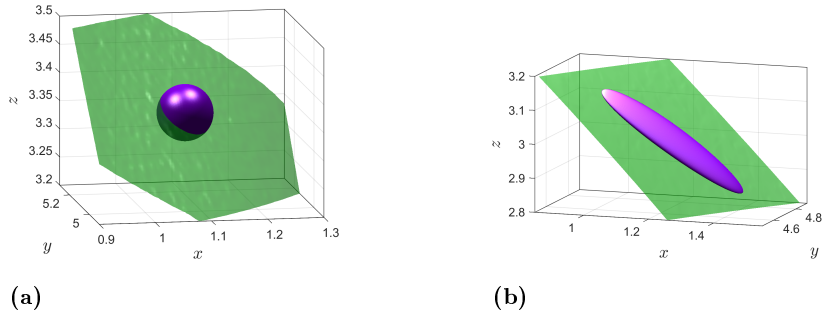}\subfloat{\label{fig:ABCaperiodic_attLCS_t4p0}}\subfloat{\label{fig:ABCaperiodic_attLCS_t5p0}}\protect\caption{Time-aperiodic ABC-type flow: Local impact of the hyperbolic attracting
LCS surface (fitted from $\eta_{2}$-lines). (a) Zoom-in on the hyperbolic
attracting LCS surface at time $t=4$ (green), shown together with
a sphere formed by tracers (purple). (b) Time-$t_{1}$ positions of
the hyperbolic attracting LCS surface and the deformed tracer sphere
(obtained under $F_{4}^{5}$).}
\end{figure}
 Then we place a small tracer sphere (purple) in this part of the
surface. Mapping both the time-4 surface and the tracer sphere forward
in time under $F_{4}^{5}$, we find that the tracers fully align with
the surface (cf. Fig. \ref{fig:ABCaperiodic_attLCS_t5p0}). By Proposition
\ref{prop:attractingLCS} and Fig. \ref{fig:svd}, this suggests that
the large hyperbolic structure from Fig. \ref{fig:ABCaperiodic_eta2Pmap}
belongs to the time-$t_{1}$ position of an attracting hyperbolic
LCS. (For confirming this globally, see Appendix \ref{sec:perturbations}.)
\begin{rem}
\label{rem:forward-repelling}With the present approach, for incompressible
flows, it is generally easier to obtain attracting hyperbolic LCSs
$\mathcal{M}(t_{1})$ at time $t_{1}$, rather than at time $t_{0}$:
An attracting LCS at time $t_{0}$ is a surface $\mathcal{M}(t_{0})$
parallel to $\xi_{2}$ and $\xi_{3}$ (cf. Proposition \ref{prop:attractingLCS}).
Mapping $\mathcal{M}(t_{0})$ to $\mathcal{M}(t_{1})$, the area element
changes by a factor of $\sigma_{2}\sigma_{3}$. Due to incompressibility
($\sigma_{1}\sigma_{2}\sigma_{3}=1$), any attracting LCS is guaranteed
to stretch in forward-time ($\sigma_{2}\sigma_{3}>1$). Since separation
can, e.g., grow exponentially in time ($\sigma_{3}\propto\exp(t_{1}-t_{0})$),
we generally expect the stretching of an attracting LCS to be substantial
($\sigma_{2}\sigma_{3}\gg1$). At the final time $t_{1}$, we thus
expect that any attracting LCS of global impact, $\mathcal{M}(t_{1})$,
traverses a significant portion of the phase space. At time $t_{0}$,
on the other hand, the surface $\mathcal{M}(t_{0})$ can still be
very small. In this sense, seeking LCSs as invariant manifolds of
the $\eta_{2}$-field is generally easier than using the $\xi_{2}$-field.
For repelling LCSs, which shrink between times $t_{0}$ and $t_{1}$,
the converse holds. (In two dimensions, the challenges of computing
repelling and attracting hyperbolic LCSs at different times $t${*}
are similar \citep{Farazmand2013,Karrasch201583}.) 
\end{rem}
In summary, compared to previous methods of identifying LCSs from
various two-dimensional direction fields \citep{Blazevski2014,Oettinger2016},
the advantage of the present approach is that it reveals both hyperbolic
and elliptic LCSs from integrations of a single direction field. Instead
of using multiple one-dimensional Poincaré sections \citep{Blazevski2014,Oettinger2016},
we can therefore search LCSs globally by using two-dimensional Poincaré
sections (cf. Figs. \ref{fig:ABC_Pmaps_d}, \ref{fig:ABCaperiodic_xi2Pmap},
\ref{fig:ABCaperiodic_eta2Pmap}). Finally, as opposed to classical
Poincaré maps that require autonomous or time-periodic systems, the
dual Poincaré map is well-defined for any non-autonomous system.
We in fact treat autonomous, time-periodic and time-aperiodic dynamical
systems on the same footing, while still benefiting from the advantages
that a classical Poincaré map offers.

\section{Conclusions\label{sec:Conclusions} }

We have presented a unified approach to obtaining elliptic and hyperbolic
LCSs in three-dimensional unsteady flows. In contrast to prior methods
based on different direction fields for different types of LCSs \citep{Blazevski2014,Oettinger2016},
we obtain a common direction field, the intermediate eigenvector field,
$\xi_{2}(x_{0})$, of the right Cauchy-Green strain tensor. Initial
positions of all variational LCSs in three dimensions are necessarily
invariant manifolds of this autonomous direction field. Equivalently,
LCS final positions are invariant manifolds of the intermediate eigenvector
field, $\eta_{2}(x_{1})$, of the left Cauchy-Green strain tensor.
We can thus identify LCS surfaces globally by classic methods for
autonomous dynamical systems. While the $\xi_{2}$- and $\eta_{2}$-systems
by themselves do not discriminate between LCS types, the procedure
from Appendix \ref{sec:perturbations} outlines how to numerically
assess the LCS type if needed. 

Overall, the present approach is significantly simpler than previous
numerical methods \citep{Blazevski2014,Oettinger2016}, and reveals
larger hyperbolic LCSs in the time-aperiodic ABC-type flow than seen
in a comparable example from previous work \citep{Blazevski2014}.
An important advantage of our approach is that LCSs are attractors
of the generally dissipative $\xi_{2}$-system, which  is not the
case in the original, typically incompressible system. Obtaining the
LCSs as attractors of the dual $\xi_{2}$-system also guarantees their
structural stability, implying that these structures will persist
under small perturbations to the underlying flow. Our approach is
restricted to three-dimensional systems, which is, however, highly
relevant for fluid mechanical applications.

With the examples of Sec. \ref{sec:Examples}, we have illustrated
the ability of the $\xi_{2}$-system to reveal LCSs. For a broadly
applicable numerical method, further development is required. Computing
two-dimensional invariant manifolds of the $\xi_{2}$-field by simply
running long integral curves is not always efficient. General approaches
for growing global stable and unstable manifolds of autonomous, three-dimensional
vector fields are, however, available in the literature (cf. \citep{krauskopf2005survey}
for a review). We expect that a general computational method for obtaining
LCSs from the $\xi_{2}$-system (\ref{eq:xi2_ODE}) can be most easily
developed by transferring one of these available approaches to computing
invariant manifolds from the setting of vector fields to direction
fields. For a given dynamical system, one would first compute the
$\xi_{2}$-field on a grid, and then apply the most suitable method
for growing invariant manifolds to construct LCSs globally in the
dual $\xi_{2}$-system\@.    

\appendix

\section{For incompressible flows, $\sigma_{2}$ is the singular value of
$DF_{t_{0}}^{t_{1}}$ closest to unity\label{sec:sigma2_unity}}

We clarify our statement that $0<\sigma_{1}<\sigma_{2}<\sigma_{3}$
and incompressibility (i.e., $\sigma_{1}\sigma_{2}\sigma_{3}=1$)
imply that $\sigma_{2}$ is the singular value of $DF_{t_{0}}^{t_{1}}$
closest to unity. We first note that $\sigma_{1}=\sqrt[3]{\sigma_{1}^{3}}<\sqrt[3]{\sigma_{1}\sigma_{2}\sigma_{3}}=1$,
and, similarly, $\sigma_{3}>1$. In general, it is unclear whether
$\sigma_{2}<1$, $\sigma_{2}=1$, or $\sigma_{2}>1$. Due to the inequalities
\begin{equation}
\sigma_{1}<\min\left\{ \sigma_{2},\frac{1}{\sigma_{2}}\right\} \leq1\leq\max\left\{ \sigma_{2},\frac{1}{\sigma_{2}}\right\} <\sigma_{3},\label{eq:sigma2_inequality}
\end{equation}
however, we consider $\sigma_{2}$ as the singular value closest to
unity. Eq. \ref{eq:sigma2_inequality} follows from a more general
statement: 
\begin{lem}
\label{label:lemma_b_mean}Given any three real numbers $a,$ $b,$
and c satisfying $0<a<b<c$, denoting their geometric mean by 
\begin{equation}
m=\sqrt[3]{abc},\label{eq:geom_mean}
\end{equation}
we have

\begin{equation}
\frac{a}{m}<\min\left\{ \frac{b}{m},\frac{m}{b}\right\} \leq1\leq\max\left\{ \frac{b}{m},\frac{m}{b}\right\} <\frac{c}{m}.\label{eq:abc_inequalities}
\end{equation}
\end{lem}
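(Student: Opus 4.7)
The plan is to reduce the chain of inequalities \eqref{eq:abc_inequalities} to a small case analysis driven by the sign of $b/m - 1$. The middle pair $\min\{b/m, m/b\} \le 1 \le \max\{b/m, m/b\}$ is immediate for any positive $b,m$: one of $b/m$ and its reciprocal lies in $(0,1]$ and the other in $[1,\infty)$. So the content of the lemma lies in the outer strict inequalities $a/m < \min\{b/m, m/b\}$ and $\max\{b/m, m/b\} < c/m$.

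My first step is to note the sharp location of $m$ relative to $a$ and $c$: from $0 < a < b < c$ we have $a = \sqrt[3]{a^3} < \sqrt[3]{abc} = m$ and similarly $m < c$. This already disposes of half of the claim in both cases, since the inequality $a/m < b/m$ reduces to $a < b$, and $b/m < c/m$ reduces to $b < c$. What remains are the comparisons involving $m/b$.

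Next I split on whether $b \le m$ or $b > m$. If $b \le m$, then $\min\{b/m, m/b\} = b/m$ and $\max\{b/m, m/b\} = m/b$, so the only nontrivial inequality left is $m/b < c/m$, equivalently $m^2 < bc$. Cubing, this is $(abc)^2 < (bc)^3$, i.e.\ $a^2 < bc$, which follows from $a < b$ and $a < c$. If instead $b > m$, then $\min\{b/m, m/b\} = m/b$ and $\max\{b/m, m/b\} = b/m$, and the nontrivial inequality is $a/m < m/b$, equivalently $ab < m^2$. Again cubing, this is $(ab)^3 < (abc)^2$, i.e.\ $ab < c^2$, which follows from $a < c$ and $b < c$.

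There is no real obstacle; the only thing to watch is keeping the strictness of the inequalities honest, since the hypothesis $0 < a < b < c$ is strict and the conclusions $a/m < \min\{\cdot\}$ and $\max\{\cdot\} < c/m$ are strict, whereas the middle bounds by $1$ are only weak (and are attained precisely when $b = m$). Writing the proof in the two-case form above makes all of these strictnesses transparent, and the entire argument reduces to the two elementary implications $a < b,\ a < c \Rightarrow a^2 < bc$ and $a < c,\ b < c \Rightarrow ab < c^2$.
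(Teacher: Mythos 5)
Your proof is correct, and it takes a genuinely different route from the paper's. The paper works additively: it sets $M = \log m$, $A = \log a$, etc., rewrites the geometric-mean relation as $3M = A + B + C$, and then derives $a/m < m/b$ by the chain $A - M = (M-B) - (C-M) < M-B$ (using $C > M$), handling the middle bounds $\min \le 1 \le \max$ via the identities $\min\{r_1,r_2\} = \tfrac{r_1+r_2}{2} - \tfrac{|r_1-r_2|}{2}$ and its $\max$ counterpart. You instead stay entirely multiplicative and split on the sign of $b - m$: in each branch you identify which single inequality is nontrivial, cube it to clear the cube root, and land on the elementary facts $a^2 < bc$ and $ab < c^2$. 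The paper's log substitution avoids a case split and emphasizes the "$m$ is the mean of $a,b,c$" structure in an additive form; your version is more elementary (no transcendental functions), and the case split actually halves the nontrivial work per branch, since in each case one of the two outer inequalities collapses to the given $a<b$ or $b<c$. Both are complete and rigorous; you also correctly note that the middle bounds $\min\le 1\le \max$ are immediate for any positive $b,m$, which the paper proves more laboriously.
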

\begin{proof}
Denoting the natural logarithm by $\log$, we introduce $M=\log(m)$,
$A=\log(a),$ $B=\log(b),$ and $C=\log(c)$. Taking the logarithm
of (\ref{eq:geom_mean}), we then obtain
\begin{equation}
3M=A+B+C.\label{eq:log_abc_inequalities}
\end{equation}
Furthermore, since $a=\sqrt[3]{a^{3}}<\sqrt[3]{abc}=m,$ we have
\begin{equation}
M-A>0,\label{eq:MgtrA}
\end{equation}
and, similarly,
\begin{equation}
C-M>0.\label{eq:CgtrM}
\end{equation}

\begin{enumerate}
\item For the first inequality in (\ref{eq:abc_inequalities}), we show
that $a/m<m/b$. By strict monotonicity of the logarithm, this is
equivalent to $\log\left(\tfrac{a}{m}\right)<\log\left(\tfrac{m}{b}\right),$
which we verify as follows:
\[
\log\left(\tfrac{a}{m}\right)=A-M\stackrel{\text{(\ref{eq:log_abc_inequalities})}}{=}3M-B-C-M=(M-B)-(C-M)\stackrel{\text{(\ref{eq:CgtrM})}}{<}M-B=\log\left(\tfrac{m}{b}\right).
\]
For the last inequality in (\ref{eq:abc_inequalities}), we can similarly
show that $m/b<c/m$ (using (\ref{eq:MgtrA}) instead of (\ref{eq:CgtrM})).
\item We show that $\min\left\{ \log\left(\frac{b}{m}\right),\log\left(\frac{m}{b}\right)\right\} \leq0$,
which is equivalent to $\min\left\{ \frac{b}{m},\frac{m}{b}\right\} \leq1$.
To verify the former inequality, we use that the minimum of any two
real numbers $r_{1}$ and $r_{2}$ satisfies $\min\{r_{1},r_{2}\}=\tfrac{r_{1}+r_{2}}{2}-\tfrac{\left|r_{1}-r_{2}\right|}{2}.$
We obtain
\[
\min\left\{ \log\left(\tfrac{b}{m}\right),\log\left(\tfrac{m}{b}\right)\right\} =\tfrac{1}{2}\left[(B-M)+(M-B)\right]-\tfrac{1}{2}\left|(B-M)-(M-B)\right|,
\]
and, thus,
\[
\min\left\{ \log\left(\tfrac{b}{m}\right),\log\left(\tfrac{m}{b}\right)\right\} =-\left|B-M\right|\leq0.
\]
Similarly, we can use $\max\{r_{1},r_{2}\}=\tfrac{r_{1}+r_{2}}{2}+\tfrac{\left|r_{1}-r_{2}\right|}{2}$
and show that $1\leq\max\left\{ \frac{b}{m},\frac{m}{b}\right\} .$
\end{enumerate}

\end{proof}
Setting $a=\sigma_{1}$, $b=\sigma_{2},$ $c=\sigma_{3}$ and $m=1$,
Lemma \ref{label:lemma_b_mean} implies (\ref{eq:sigma2_inequality}).

\section{Theorem \ref{thm:xi2_dual} and higher dimensions\label{sec:Higher-dimensions}}

We discuss the possibility of a counterpart to our main result, Theorem
\ref{thm:xi2_dual}, in higher dimensions. We start with four dimensions,
where there are four singular vectors $\xi_{1,2,3,4}$. As in Sec.
\ref{sec:Review3d}, we label them such that the corresponding singular
values $\sigma_{1,2,3,4}$ are in ascending order. 
\begin{example*}
As a prerequisite, we would need to extend, e.g., the notion of a
hyperbolic repelling LCS (cf. Definition \ref{def:hyperbolicLCS})
from three to four dimensions. As in Proposition \ref{prop:repellingLCS},
we would need a three-dimensional hypersurface $\mathcal{M}(t_{0})$
in $\mathbb{R}^{4}$ which is normal to $\xi_{4}$ and hence tangent
to $\xi_{1,2,3}$ everywhere. It is not a priori obvious whether such
a geometry is possible or not.

Consider a small open ball $B\subset\mathbb{R}^{4}$ where the singular
values $\sigma_{1,2,3,4}$ are distinct. Within $B$, we may assume
that the $\xi_{1,2,3,4}$-fields are smooth vector fields. We denote
the Lie bracket between two such vector fields $v$ and $w$ by $[v,w]$.

 We want to construct a three-dimensional hypersurface $\mathcal{M}(t_{0})$
such that $\mathcal{M}(t_{0})\cap B$ is normal to $\xi_{4}$. This
is possible only if the fields $\xi_{1,2,3}$ satisfy  
\begin{equation}
[\xi_{1},\xi_{2}],[\xi_{1},\xi_{3}],[\xi_{2},\xi_{3}]\in\text{span}\{\xi_{1},\xi_{2},\xi_{3}\}\label{eq:Lie-in-span}
\end{equation}
for all points in $\mathcal{M}(t_{0})\cap B$ (cf., e.g., \citep{Lee2012involutive}).
Conditions (\ref{eq:Lie-in-span}) are equivalent to the Frobenius
conditions  
\begin{equation}
\left\langle [\xi_{1},\xi_{2}],\xi_{4}\right\rangle =0,\,\left\langle [\xi_{1},\xi_{3}],\xi_{4}\right\rangle =0,\,\left\langle [\xi_{2},\xi_{3}],\xi_{4}\right\rangle =0.\label{eq:frobenius}
\end{equation}
(In the context of LCSs, such conditions have already been considered
in \citep{Blazevski2014}.) Unless 0 is a critical value, by the Preimage
Theorem \citep{guillemin1974differential}, each of the three conditions
in (\ref{eq:frobenius}) defines a codimension-one submanifold in
$B$. Now there are two main possibilities:

\emph{Case 1: }We suppose that 0 is a regular value for all conditions
in (\ref{eq:frobenius}). Since the conditions (\ref{eq:frobenius})
are generally independent from each other, the subset $S$ of $B$
where all three conditions are satisfied simultaneously is codimension-three,
i.e., a line. For $\mathcal{M}(t_{0})$ to be a well-defined repelling
LCS, we need $\mathcal{M}(t_{0})\cap B$ to be a subset of $S.$ By
our assumption, however, $\mathcal{M}(t_{0})\cap B$ is a three-dimensional
hypersurface. Since $S$ is only one-dimensional, we have reached
a contradiction. 

\emph{Case 2:} The remaining possibility is that 0 is a critical value
for at least one of the conditions in (\ref{eq:frobenius}). Then
there is no general restriction on the geometry of the corresponding
zero-level sets from (\ref{eq:frobenius}). In particular, if 0 is
critical value for at least two of the three conditions in (\ref{eq:frobenius}),
then the subset $S$ of $B$ where all three conditions are satisfied
simultaneously can be a three- or four-dimensional manifold. In this
case, $S$ can contain a three-dimensional surface $\mathcal{M}(t_{0})\cap B$
and, thus, locally allow for a repelling LCS $\mathcal{M}(t_{0})$.
The catch is, however, that the set of critical values for each of
the conditions in (\ref{eq:frobenius}) has measure zero in $\mathbb{R}$.
(This is due to Sard's Theorem \citep{guillemin1974differential}.)
Because of inevitable numerical inaccuracies and imprecisions, with
probability 1, the collection of practically available $\xi_{1,2,3,4}$-fields
will hence produce a regular value for each of the Frobenius conditions
in (\ref{eq:frobenius}). This brings us back to \emph{Case 1}. 

We conclude that only \emph{Case 1} is relevant in practice. (Unless,
of course, a special symmetry of the flow map $F_{t_{0}}^{t_{1}}$
implies that the Frobenius conditions (\ref{eq:frobenius}) are not
independent to begin with.) Straightforwardly extending Definition
\ref{def:hyperbolicLCS} and, therefore, seeking hyperbolic repelling
LCSs as surfaces normal to $\xi_{4}$ is not a useful approach for
general dynamical systems $\dot{x}=u(x,t)$ in four dimensions. 
\end{example*}

The above discussion holds in any dimension $N\in\left\{ 4,5,...\right\} $
and for any LCS type: From a collection of $N-1$ vector fields, we
can pick $f=\binom{N-1}{2}$ pairs, yielding precisely $f$ Frobenius
conditions (cf. (\ref{eq:frobenius})). For useful and general LCS
definitions in the spirit of Sec. \ref{sec:Review3d}, we would generally
need $f=1$, but this is only achieved for $N=3$. This precludes
straightforward extensions of Theorem \ref{thm:xi2_dual} from three
to higher dimensions.

\section{Numerical details for the examples\label{sec:details-examples}}

Here we describe the details of our numerical approach. These apply
to all the examples in Sec. \ref{sec:Examples}.

In order to evaluate $\xi_{2}$, we need to compute both the flow
map $F_{t_{0}}^{t_{1}}$ and its derivative $DF_{t_{0}}^{t_{1}}$.
Here we do not use finite differentiating in order to obtain $DF_{t_{0}}^{t_{1}}$
from $F_{t_{0}}^{t_{1}}$(cf., e.g., \citep{Haller2015}), but we
explicitly solve for $DF_{t_{0}}^{t_{1}}$. Since the flow map $F_{t_{0}}^{t}$
satisfies 
\begin{equation}
\frac{d}{dt}F_{t_{0}}^{t}(x_{0})=u(F_{t_{0}}^{t}(x_{0}),t),\label{eq:flowmap-ode}
\end{equation}
we differentiate (\ref{eq:flowmap-ode}) with respect to $x_{0}$,
and conclude that $DF_{t_{0}}^{t}(x_{0})$ evolves according to the
well-known equation of variations
\begin{equation}
\frac{d}{dt}DF_{t_{0}}^{t}(x_{0})=Du\left(F_{t_{0}}^{t}(x_{0}),t\right)DF_{t_{0}}^{t}(x_{0}).\label{eq:eq-of-vari}
\end{equation}
Written out in coordinates, (\ref{eq:eq-of-vari}) is a system of
nine equations that is coupled to the three equations in (\ref{eq:flowmap-ode})
and, therefore, both (\ref{eq:flowmap-ode}) and (\ref{eq:eq-of-vari})
need to be solved simultaneously as a system of 12 variables. We can
thus obtain $DF_{t_{0}}^{t_{1}}$ and $\xi_{2}$ to very high accuracy,
which we need for running long integral curves of (\ref{eq:xi2_ODE}). 

Once $DF_{t_{0}}^{t_{1}}$ is available, rather than using the Cauchy-Green
strain tensor \citep{Haller2015}, we obtain $\xi_{2}$ by SVD (cf.
Remark \ref{remark:Cauchy-Green} and \citep{Watkins2005NoEIGButSVD}).
(For $\eta_{2}$, we use the backward-time deformation gradient $DF_{t_{1}}^{t_{0}}$.)

We do not compute the $\xi_{2}$-field on a spatial grid, but just
along the $\xi_{2}$-lines that we integrate. This ensures that we
can locate both small and highly-modulated LCSs, instead of risking
to accidentally undersample unknown structures. At each point of the
curve, we assign the orientation of $\xi_{2}$ to be the same as it
was at the previous point on the curve. For the initial point, one
has to make a manual choice; e.g., in Cartesian coordinates $(x,y,z)$,
impose alignment with the $(0,0,1$)-direction. 

We perform all the integrations using a Runge-Kutta (4,5) method \citep{Dormand1980},
with an adaptive stepper at absolute and relative error tolerances
of $Tol=10^{-8}$. 

Finally, we obtain all the Poincaré maps from trajectories (of either
$u$, $\xi_{2}$, or $\eta_{2}$) by plotting the $(x,y)$-point data
corresponding to $z$-values from $[0,\text{\ensuremath{\epsilon}}]\cup[2\pi-\epsilon,2\pi]$,
with $\epsilon=2\cdot10^{-3}$. 

For the steady ABC flow (cf. Sec. \ref{sub:ABCsteady}), we evaluate
how the equation of variations (\ref{eq:eq-of-vari}) improves the
results for $\xi_{2}$ compared to finite differencing of $F_{t_{0}}^{t_{1}}$
(cf. \citep{Haller2015}). We define a uniform rectangular grid of
500$\times$500 initial conditions $x_{0}$ in the plane given by
$\mbox{\{\ensuremath{(x,y,0):}\,\ensuremath{x,y\in[0,2\pi]}\}}$,
for which we evaluate $DF_{t_{0}}^{t_{1}}$ and thus $\xi_{2}$ using
these two methods. We perform finite differencing as described in
\citep{Haller2015}, Eq. 9, with $\delta_{1,2,3}=10^{-5}e_{1,2,3}$
and $e_{1,2,3}$ denoting the unit vectors in the $x,y,z$ coordinate
directions. 
\begin{figure}[h]
\centering{}\includegraphics[width=0.8\columnwidth]{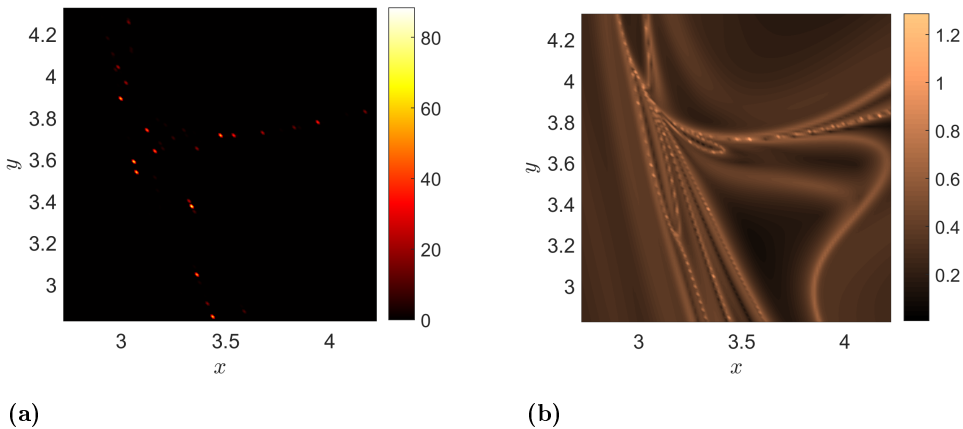}\subfloat{\label{fig:ABCsteady_xi2Error}}\subfloat{\label{fig:ABCsteady_FTLE}}\protect\caption{Steady ABC flow: Error due to finite differencing. (a) Angle in degrees
between $\xi_{2}$ obtained from finite differencing of $F_{t_{0}}^{t_{1}}$
(cf. \citep{Haller2015}) and $\xi_{2}$ obtained using the equation
of variations (\ref{eq:eq-of-vari}). (b) FTLE $(t_{1}-t_{0})^{-1}\log\sigma_{3}$
obtained using the equation of variations (\ref{eq:eq-of-vari}).}
\end{figure}
 In Fig. \ref{fig:ABCsteady_xi2Error}, we show the angle between
$\xi_{2}$ obtained using (\ref{eq:eq-of-vari}) and $\xi_{2}$ obtained
from finite differencing of $F_{t_{0}}^{t_{1}}$. The former method
can be considered practically exact here, with the only numerical
parameter being $Tol=10^{-8}$ (checked for convergence). The largest
error we find in Fig. \ref{fig:ABCsteady_xi2Error} is approximately
$88.35{^\circ}$. Since $\xi_{2}$ is only defined up to orientation,
the largest possible error would be $90{^\circ}$. Hence we conclude
from Fig. \ref{fig:ABCsteady_xi2Error} that finite differencing can
cause arbitrarily large errors in $\xi_{2}$. Even though errors are
confined to locations of exceptionally large separation, as indicated
by the finite-time Lyapunov exponent (FTLE) field (cf. Fig. \ref{fig:ABCsteady_FTLE}),
these locations belong to ridges of the FTLE field, a widely used
indicator of hyperbolic LCSs \citep{Haller2015}. Since we want to
globally detect hyperbolic LCSs by integrating the $\xi_{2}$-field,
we use (\ref{eq:eq-of-vari}) to determine $\xi_{2}$.

We note that even when the velocity field (\ref{eq:flowdef}) is only
available through data from experiments and simulations, the equation
of variations (\ref{eq:eq-of-vari}) has been used to obtain numerically
accurate results for the flow map and its gradient \citep{Miron20126419}.

\section{Perturbations to the $\xi_{2}$-field\label{sec:perturbations}}

In Figs. \ref{fig:ABCaperiodic_repLCS_t0p0}, \ref{fig:ABCaperiodic_repLCS_t1p0},
we place a tracer sphere in an LCS candidate surface, finding that
it stretches most in the direction normal to the surface. Based on
this local property, in Sec. \ref{sub:ABCaperiodic5}, we conclude
that the entire surface should be a repelling LCS. Even though we
expect any hyperbolic LCS obtained from a forward-time computation
to be repelling (cf. Remark \ref{rem:forward-repelling}), it is desirable
to have a global approach to assessing the LCS type of a candidate
surface.

If we consider, e.g., a repelling LCS $\mathcal{M}(t_{0})$, at any
point $x_{0}\in\mathcal{M}(t_{0})$, the tangent space $T_{x_{0}}\mathcal{M}(t_{0})$
is the subspace of $\mathbb{R}^{3}$ spanned by $\xi_{2}(x_{0})$
and $\xi_{1}(x_{0})$ (cf. Proposition \ref{prop:repellingLCS}).
By repeating the reasoning that leads to Theorem \ref{thm:xi2_dual},
we conclude that any repelling LCS $\mathcal{M}(t_{0})$ must be an
invariant manifold of any dynamical system of the form
\[
x_{0}'=p\,\xi_{2}(x_{0})+(1-p)\xi_{1}(x_{0}),\quad p\in[0,1].
\]

By Propositions \ref{prop:attractingLCS}--\ref{prop:ellipticLCS},
we can make similar observations for the remaining LCS types. In summary: 
\begin{prop}
\label{prop:dual_family}For any parameter value \foreignlanguage{english}{\textup{$p\in[0,1]$}},
the initial position $\mathcal{M}(t_{0})$ of any hyperbolic or elliptic
LCS (Definitions \ref{def:hyperbolicLCS}--\ref{def:ellipticLCS})
is an invariant manifold of the autonomous dual dynamical system 
\begin{equation}
x_{0}^{\prime}=p\,\xi_{2}(x_{0})+(1-p)\tilde{\xi}(x_{0})\,,\label{eq:xip_ODE}
\end{equation}
with $\mbox{\ensuremath{\tilde{\xi}}=\ensuremath{\xi}}_{3}$ for attracting
hyperbolic LCSs; $\tilde{\xi}=\xi_{1}$ for repelling hyperbolic LCSs;
and $\tilde{\xi}=\mp\tilde{\gamma}\xi_{1}+\tilde{\alpha}\xi_{3}$
or $\tilde{\xi}=\mp\gamma\xi_{1}+\alpha\xi_{3}$ for elliptic LCSs
(cf. (\ref{eq:shear_normal}), (\ref{eq:elliptic_normal})).\end{prop}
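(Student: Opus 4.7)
The plan is to observe that for each LCS type listed in Propositions \ref{prop:repellingLCS}--\ref{prop:ellipticLCS}, the tangent plane $T_{x_0}\mathcal{M}(t_0)$ at every point $x_0\in\mathcal{M}(t_0)$ is two-dimensional, and $\xi_2(x_0)$ is only one of its generators. Identifying a second tangent direction $\tilde{\xi}(x_0)$ for each LCS type immediately gives invariance of $\mathcal{M}(t_0)$ under any vector field taking values in the tangent plane, in particular under the convex combination $p\,\xi_2+(1-p)\tilde{\xi}$.

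First I would dispose of the hyperbolic cases. For a repelling LCS, Proposition \ref{prop:repellingLCS} states that the surface normal at $x_0$ is parallel to $\xi_3(x_0)$; since $\xi_{1,2,3}(x_0)$ form an orthonormal basis of $\mathbb{R}^3$, the tangent plane is spanned by $\xi_1(x_0)$ and $\xi_2(x_0)$, so I set $\tilde{\xi}=\xi_1$. The attracting case is symmetric via Proposition \ref{prop:attractingLCS}, giving $\tilde{\xi}=\xi_3$.

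Next I would handle the two elliptic cases in parallel. For a shear-maximizing LCS (Proposition \ref{prop:ellipticLCS_old}), the normal direction has the form $\tilde{n}^{\pm}=\tilde{\alpha}\,\xi_1\pm\tilde{\gamma}\,\xi_3$. A vector in $\mathrm{span}\{\xi_1,\xi_3\}$ orthogonal to $\tilde{n}^{\pm}$ is $\mp\tilde{\gamma}\,\xi_1+\tilde{\alpha}\,\xi_3$, which together with $\xi_2$ spans the tangent plane; this gives the expression $\tilde{\xi}=\mp\tilde{\gamma}\,\xi_1+\tilde{\alpha}\,\xi_3$. The near-uniformly stretching case (Proposition \ref{prop:ellipticLCS}) is handled identically with $(\tilde{\alpha},\tilde{\gamma})$ replaced by $(\alpha,\gamma)$.

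The final step is to combine these observations with the reasoning already used in the proof of Theorem \ref{thm:xi2_dual}: any integral curve of (\ref{eq:xip_ODE}) launched from a point of $\mathcal{M}(t_0)$ has its tangent vector contained in $T_{x_0}\mathcal{M}(t_0)$ for every $p\in[0,1]$, and therefore remains confined to $\mathcal{M}(t_0)$. Hence $\mathcal{M}(t_0)$ is an invariant manifold of (\ref{eq:xip_ODE}) for each $p\in[0,1]$ and each choice of $\tilde{\xi}$ listed. There is no real obstacle here; the only subtle point is an orientation check, namely that the prescribed $\tilde{\xi}$ can be given a smooth local orientation wherever $\sigma_{1,2,3}$ are distinct, so that (\ref{eq:xip_ODE}) is well-defined as a direction field in the same sense as (\ref{eq:xi2_ODE}) (cf. the remark after Theorem \ref{thm:xi2_dual}).
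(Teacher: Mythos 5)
Your proposal is correct and matches the paper's own argument: in Appendix~\ref{sec:perturbations} the paper likewise identifies, for each LCS type, the tangent plane $T_{x_0}\mathcal{M}(t_0)=\mathrm{span}\{\xi_2,\tilde\xi\}$ from the normal field in Propositions~\ref{prop:repellingLCS}--\ref{prop:ellipticLCS}, and then concludes invariance under any convex combination $p\,\xi_2+(1-p)\tilde\xi$ by repeating the reasoning behind Theorem~\ref{thm:xi2_dual}. The only extra content you add is the explicit verification that $\mp\tilde\gamma\,\xi_1+\tilde\alpha\,\xi_3$ is orthogonal to $\tilde n^{\pm}$ and the remark on smooth local orientability, both of which are consistent with (and implicit in) the paper's treatment.
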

\begin{rem}
Replacing the $\xi_{1,2,3}$ by $\sigma_{1,2,3}\cdot\eta_{1,2,3}$,
Proposition \ref{prop:dual_family} applies\emph{ verbatim} to final
LCS positions $\mathcal{M}(t_{1})$.
\end{rem}

This means that for each LCS type, there is a specific family of dual
dynamical systems that yields the respective LCS initial positions
as invariant manifolds. The dual dynamical system associated with
$\xi_{2}$ remains exceptional though, because this is the only dual
dynamical system shared by all LCS types (cf. Proposition \ref{prop:dual_family}). 

We now demonstrate how these observations help to determine the LCS
type of a candidate surface: For the hyperbolic LCS candidate in the
time-aperiodic ABC-type flow (cf. Sec. \ref{sub:ABCaperiodic5}),
it turns out that only a single long $\xi_{2}$-line is enough to
indicate the surface (cf. Fig. \ref{fig:ABCaperiodic_xi2-line-hyperbolic}).
Specifically, among the $\xi_{2}$-lines that get attracted to the
hyperbolic LCS candidate surface in the dual Poincaré map (cf. Fig.
\ref{fig:ABCaperiodic_xi2Pmap}), we have randomly picked the $\xi_{2}$-line
with initial condition approximately equal to $(5.03,3.14,0.00)$.
Other choices of $\xi_{2}$-lines yield similar results. 
\begin{figure}[h]
\includegraphics[width=0.95\linewidth]{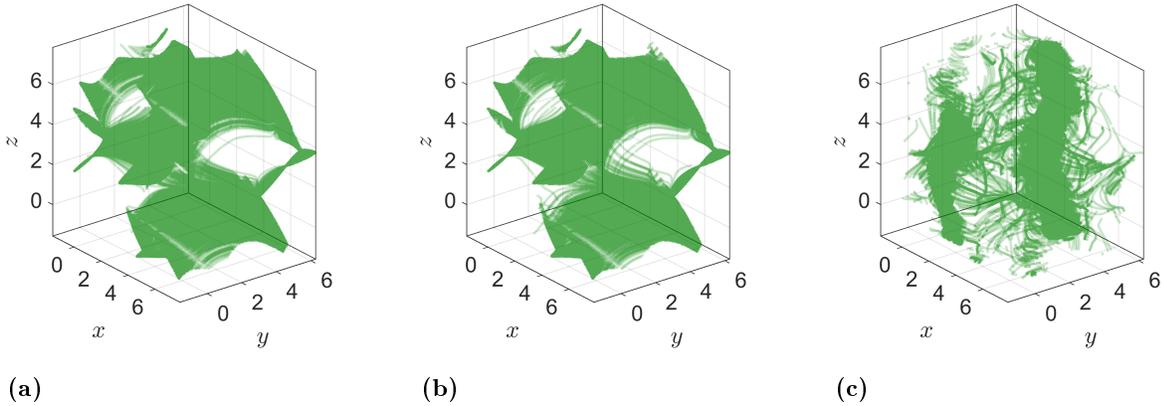}\subfloat{\label{fig:ABCaperiodic_xi2-line-hyperbolic}}\hfill{}\subfloat{\label{fig:ABCaperiodic_xi2xi1-line-hyperbolic}}\hfill{}\subfloat{\label{fig:ABCaperiodic_xi2xi3-line-hyperbolic}}\protect\caption{Time-aperiodic ABC-type flow: Arc segments of integral curves of three
$\xi_{2}+\epsilon\tilde{\xi}$ fields. (Each curve is shown for arclength
parameter $s\in[4\cdot10^{4},5\cdot10^{4}]$). The initial condition
is approximately $(5.03,3.14,0.00)$ for all three integral curves.
Here we use the periodicity of the phase space to extend the domain
slightly beyond $[0,2\pi]^{3}$. (a) A $\xi_{2}$-line ($\epsilon=0$)
indicates the hyperbolic candidate surface obtained from the dual
Poincaré map (cf. Fig. \ref{fig:ABCaperiodic_xi2Pmap}). (b) An integral
curve of $\xi_{2}+\epsilon\xi_{1}$ ($\epsilon=0.01$) reproduces
the hyperbolic candidate surface obtained from the corresponding $\xi_{2}$-line
(cf. Fig. \ref{fig:ABCaperiodic_xi2-line-hyperbolic}). (c) An integral
curve of $\xi_{2}+\epsilon\xi_{3}$ ($\epsilon=0.01$) does not reproduce
the hyperbolic candidate surface obtained from the corresponding $\xi_{2}$-line
(cf. Fig. \ref{fig:ABCaperiodic_xi2-line-hyperbolic}).}
\end{figure}

We next add a small perturbation to the $\xi_{2}$-field, i.e., consider
the dual dynamical system
\begin{equation}
x_{0}^{\prime}=\xi_{2}(x_{0})+\epsilon\xi_{1}(x_{0}),\label{eq:xi2_tang_pert}
\end{equation}
with $\epsilon=0.01$. Using the same initial condition and numerical
settings as above, we compute an integral curve of (\ref{eq:xi2_tang_pert}).
The result indicates virtually the same surface as obtained from the
$\xi_{2}$-field (cf. Fig. \ref{fig:ABCaperiodic_xi2xi1-line-hyperbolic}).
This suggests that this surface is invariant for the entire family
of direction fields $p\xi_{2}+(1-p)\xi_{1}$. By Proposition \ref{prop:dual_family},
the entire structure should hence be a repelling LCS.

If we, on the other hand, repeat the above computation for the dual
dynamical system
\begin{equation}
x_{0}^{\prime}=\xi_{2}(x_{0})+\epsilon\xi_{3}(x_{0}),\label{eq:xi2_norm_pert}
\end{equation}
where $\epsilon=0.01$, then the entire structure disappears, and
the attractor for this initial condition remains unclear (cf. Fig.
\ref{fig:ABCaperiodic_xi2xi3-line-hyperbolic}).  Even though the
perturbation $\epsilon\xi_{3}$ is small, the dynamics of (\ref{eq:xi2_norm_pert})
is completely different than for (\ref{eq:xi2_tang_pert}). This is
consistent with our conclusion that the structure from Figs. \ref{fig:ABCaperiodic_xi2-line-hyperbolic},
\ref{fig:ABCaperiodic_xi2xi1-line-hyperbolic} is a repelling hyperbolic
LCS. 

\bibliographystyle{abbrv}

\end{document}